\title{Non-uniformly flat affine algebraic hypersurfaces}
\author{Vamsi Pritham Pingali}
\email{vamsipingali@iisc.ac.in}
\address{Department of Mathematics \newline \indent Indian Institute of Science \newline \indent
Bangalore, India - 560012}
\author{Dror Varolin} 
\email{dror@math.stonybrook.edu}
\address{Department of Mathematics \newline \indent Stony Brook University \newline \indent Stony Brook, NY 11794-3651}
\newcommand{\noi}{\noindent}
\newcommand{\cO}{{\mathcal O}}
\newcommand{\cP}{{\mathcal P}}
\newcommand{\sB}{{\mathscr B}}
\newcommand{\sC}{{\mathscr C}}
\newcommand{\sI}{{\mathscr I}}
\newcommand{\sR}{{\mathscr R}}
\newcommand{\fB}{{\mathfrak B}}
\newcommand{\fC}{{\mathfrak C}}
\newcommand{\fH}{{\mathfrak H}}
\newcommand{\fJ}{{\mathfrak J}}
\newcommand{\vp}{\varphi} 
\newcommand{\ve}{\varepsilon}
\newcommand{\bbB}{{\mathbb B}}
\newcommand{\bbC}{{\mathbb C}}
\newcommand{\bbD}{{\mathbb D}}
\newcommand{\bbP}{{\mathbb P}}
\newcommand{\bbR}{{\mathbb R}}
\newcommand{\red}{\hfill $\diamond$}
\newcommand{\di}{\partial}
\newcommand{\dbar}{\bar \partial}
\newcommand{\re}{{\rm Re\ }}
\newcommand{\im}{{\rm Im\ }}
\newcommand{\relcomp}{\Subset}
\newcommand{\ii}{\sqrt{-1}}
\newcommand{\emb}{\hookrightarrow}
\def\Xint#1{\mathchoice 
{\XXint\displaystyle\textstyle{#1}}
{\XXint\textstyle\scriptstyle{#1}} 
{\XXint\scriptstyle\scriptscriptstyle{#1}}
{\XXint\scriptscriptstyle\scriptscriptstyle{#1}}
\!\int} 
\def\XXint#1#2#3{{\setbox0=\hbox{$#1{#2#3}{\int}$} 
\vcenter{\hbox{$#2#3$}}\kern-.5\wd0}} 
\def\dashint{\Xint-} 
\begin{document}

\theoremstyle{plain}
\newtheorem{thm}{\sc Theorem}
\newtheorem*{s-thm}{\sc Theorem}
\newtheorem{lem}{\sc Lemma}[section]
\newtheorem{d-thm}[lem]{\sc Theorem}
\newtheorem{prop}[lem]{\sc Proposition}
\newtheorem{cor}[lem]{\sc Corollary}

\theoremstyle{definition}
\newtheorem{conj}[lem]{\sc Conjecture}
\newtheorem*{sconj}{\sc Conjecture}
\newtheorem{prob}[lem]{\sc Open Problem}
\newtheorem*{s-prob}{\sc Open Problem}
\newtheorem{defn}[lem]{\sc Definition}
\newtheorem*{s-defn}{\sc Definition}
\newtheorem{qn}[lem]{\sc Question}
\newtheorem{ex}[lem]{\sc Example}
\newtheorem{rmk}[lem]{\sc Remark}
\newtheorem*{s-rmk}{\sc Remark}
\newtheorem{rmks}[lem]{\sc Remarks}
\newtheorem*{ack}{\sc Acknowledgment}


\maketitle

\setcounter{tocdepth}2

\vskip .3in


\begin{abstract}
The relationship between interpolation and separation properties of hypersurfaces in Bargmann-Fock spaces over $\bbC ^n$ is not well-understood except for $n=1$.  We present four examples of smooth affine algebraic hypersurfaces that are not uniformly flat, and show that exactly two of them are interpolating.  
\end{abstract}

\section{Introduction}

A natural problem is to establish a geometric characterization of all analytic subsets of $\bbC ^n$ that are \emph{interpolating} for the Bargmann-Fock space.  Let us be more precise.

\begin{defn}
A weight function $\vp$ is said to be a \emph{Bargmann-Fock weight} if  
\begin{equation}\label{bf-weight-hyp}
m \omega _o \le dd^c \vp \le M \omega _o
\end{equation}
for some  positive constants $m$ and $M$.  In this case the space 
\[
\sB _n (\vp) := \cO (\bbC ^n) \cap L^2 (e^{-\vp} dV)
\]
is called a \emph{Bargmann-Fock space}.  The weight $\vp (z) = |z|^2$ is called the \emph{standard} Bargmann-Fock weight, and the corresponding Hilbert space, denoted here simply as $\sB_n$, is called the \emph{standard}, or \emph{classical}, Bargmann-Fock space.
\end{defn}

\noi Here and below we write $d^c = \frac{\ii}{2} (\dbar - \di)$, so that $dd^c = \ii \di \dbar$, and we denote by 
\[
\omega _o = dd^c|z|^2
\]
the K\"ahler form for the Euclidean metric in $\bbC ^n$ with respect to the standard coordinate system.

Interpolation may be described as follows.  Let $(X,\omega)$ be a Stein K\"ahler manifold of complex dimension $n$, equipped with a holomorphic line bundle $L \to X$ with smooth Hermitian metric $e^{-\vp}$, and let $Z \subset X$ be a complex analytic subvariety of pure dimension $d$.  To these data assign the Hilbert spaces 
\[
\sB _n (X, \vp) := \left \{ F \in H^0(X,\cO _X(L))\ ;\ ||F||^2_X := \int _{X} |F|^2 e^{-\vp} \frac{\omega ^n}{n!} < +\infty \right \}
\]
and 
\[
\fB _d (Z, \vp) := \left \{ f \in H^0(Z,\cO _Z(L))\ ;\ ||f||^2_Z := \int _{Z_{\rm reg}} |f|^2 e^{-\vp} \frac{\omega ^d}{d!} < +\infty \right \}.
\]
Such Hilbert spaces are called \emph{(generalized) Bergman spaces}.  When the underlying manifold is $\bbC^n$ and the weight $\vp$ is Bargmann-Fock, we recover the Bargmann-Fock spaces just mentioned.  

We say that $Z$ is interpolating if the restriction map 
\[
\sR _Z : H^0(X, \cO _X (L)) \to H^0(Z, \cO _Z(L))
\]
induces a surjective map on Hilbert spaces.   (One can also ask whether the induced map is bounded, or injective, or has closed image, etc.)  If the induced map 
\[
\sR _Z : \sB _n (X, \vp) \to \fB _d (Z, \vp)
\]
is surjective then one says that $Z$ is an \emph{interpolation subvariety}, or simply \emph{interpolating}.

If $n \ge 2$ then even in the most elementary case $X = \bbC ^n$, $\omega =  \omega _o = dd^c |z|^2$ and $\vp (z) = |z|^2$ relatively little is known about which subvarieties (and even smooth manifolds) are interpolating.  (By way of contrast the case $n=1$ and $X= \bbC$ is rather well-understood; c.f. Section \ref{review-of-interp}.)

The present article focuses on the latter setting, and even more selectively, on the rather restricted class of smooth affine algebraic hypersurfaces.  The basic problem considered in this article is the following.

\medskip

\noi {\bf Basic Question:}  \emph{What geometric properties characterize  interpolating algebraic hypersurfaces for Bargmann-Fock spaces?}

\medskip

There are sufficient conditions on a hypersurface $Z$ so that it is interpolating for a Bargmann-Fock space.  For example, one has the following theorem, that generalizes a result in \cite{osv} about smooth surfaces to the possibly singular case.

\begin{d-thm}\textup{\cite{pv}}\label{interp-thm-pv-osv}
Let $\vp \in \sC ^2 (\bbC ^n)\cap {\rm PSH}(\bbC ^n)$.  (For example, $\vp$ can be Bargmann-Fock weight, i.e., satisfying \eqref{bf-weight-hyp}.)  Then every uniformly flat hypersurface $Z\subset \bbC ^n$ whose asymptotic upper density $D^+_{\vp}(Z)$ is less than $1$ is an interpolation hypersurface. 
\end{d-thm}

We shall recall the definition of the asymptotic upper density in Section \ref{review-of-interp}, in which we will provide a brief and biased overview of interpolation theory.  As for uniform flatness, a smooth hypersurface $Z \subset \bbC ^n$ is uniformly flat if there is a constant $\ve > 0$ such that the set 
\[
N_{\ve}(Z) = \left \{ x\in \bbC ^n\ ;\ B_{\ve} (x) \cap Z \neq \emptyset \right \}
\]
of all points of $\bbC ^n$ that are a distance less than $\ve$ from $Z$ is a tubular neighborhood of $Z$.  Equivalently, for any pair of distinct points $p, q\in Z$, if $D_p$ and $D_q$ denote the Euclidean complex disks of radius $\ve$ and centers $p$ and $q$ respectively such that $D_p \perp T_{Z,p}$ and $D_q \perp T_{Z,q}$ , then
\[
D_p \cap D_q = \emptyset.
\]
\begin{s-rmk}
The notion of uniform flatness was introduced in \cite{osv}, and extended to singular hypersurfaces in \cite{pv}, but since we will not use the latter here, we will not recall the definition in the singular case.
\red
\end{s-rmk}

In the case of a smooth hypersurface, Theorem \ref{interp-thm-pv-osv} has a very simple proof which we discovered in \cite{pv}.  We shall recall this proof in the last paragraph of Section \ref{review-of-interp}, after providing a brief and biased overview of the theory of interpolation, and stating a version of the $L^2$ extension theorem (Theorem \ref{ot-tak}).

The connection between uniform flatness of a hypersurface $Z$ and the surjectivity of $\sR_{Z}$ was shown in \cite{pv} to be somewhat more mysterious than previously believed.  

\begin{enumerate}
\item[(a)]  There is a holomorphic embedding $C$ of $\bbC$ in $\bbC ^2$ whose asymptotic upper density is zero, such that $C$ is not uniformly flat but nevertheless it is an interpolation hypersurface.   

\item[(b)] While uniform flatness is not necessary, it cannot be dropped completely; simple examples from the $1$-dimensional setting can be extended via cartesian product to give examples in dimension $2$ or more.
\end{enumerate}

In part to focus more on the role (or lack of role) of uniform flatness, but also for other reasons, it is interesting to restrict oneself to the class of algebraic hypersurfaces.  Indeed, as we shall recall in Section \ref{review-of-interp}, every algebraic hypersurface has zero asymptotic upper density.  (In this regard, the examples produced in \cite{pv} to demonstrate (a) and (b) are not algebraic.)

We have not yet succeeded in answering our basic question of  characterizing interpolating affine algebraic hypersurfaces.  However, the results we obtained, using techniques that are important and interesting in their own right, provide some data for the problem that we believe will be useful in attacking the basic question.

\subsection{Results}

Consider the smooth complex curves 
\[
C_1 := \{ (x,y) \in \bbC ^2\ ;\ x^2y^2 =1\} \quad \text{and} \quad C_2 := \{ (x,y) \in \bbC ^2\ ;\ xy^2 =1\}.
\]
The curve $C_1$ is an embedding of two disjoint copies of $\bbC^*$ embedded via the maps 
\[
\Psi _{1\pm}(t) := (t^{-1}, \pm t),
\]
and each of the components $C_{1\pm} = \Psi_{1\pm}(\bbC ^*)$ is uniformly flat.   The curve $C_2$ is a copy of $\bbC ^*$ embedded in $\bbC ^2$ via the map
\[
\Psi _2 (t) := (t^{-2}, t).
\]
Both $C_1$ and $C_2$ are not uniformly flat:
\begin{enumerate}
\item[($C_1$)]  Let $\delta > 0$.  The points $p_{\pm} := (\delta^{-1} , \pm \delta)$ both lie on $C_1$, and the disks perpendicular to $C_1$ at $p_{\pm}$ intersect at the point $I = (\delta ^{-1} - \delta ^3, 0)$; the distance from $I$ tp $p_{\pm}$ is $\delta(1+\delta ^4/4)^{1/2}$, which can be made smaller than any positive number by taking $\delta$ sufficiently small.

\item[($C_2$)] Again let $\delta > 0$.  The points $p_{\pm} = (\delta ^{-2}, \pm \delta)$ both lie on $C_2$, and the disks perpendicular to $C_2$ at $p_{\pm}$ intersect at the point $I = (\delta ^{-1} - \delta ^4/2, 0)$; the distance from $I$ to $p_{\pm}$ is computed to be $\delta(1+\delta ^6/4)^{1/2}$, which can be made smaller than any positive number by taking $\delta$ sufficiently small.
\end{enumerate}

\noi The first two results we state are the following theorems.
\begin{thm}\label{main-C1}
Let $\vp \in \sC ^2 (\bbC^2)$ satisfy \eqref{bf-weight-hyp}.  Then there exists $f \in \fB _1 (C_1, \vp)$ such that any holomorphic extension $F$ of $f$ to $\bbC ^2$ does not lie in $\sB _2(\vp)$.
\end{thm}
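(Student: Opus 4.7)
My plan is to exploit the involution $\sigma(x,y):=(x,-y)$, which preserves $C_1$ but exchanges its components $C_{1+}$ and $C_{1-}$. I will construct $f\in\fB_1(C_1,\vp)$ satisfying $f\circ\sigma=-f$; for any extension $F$, the antisymmetric part $F_{\mathrm{anti}}:=(F-F\circ\sigma)/2$ still restricts to $f$ on $C_1$ and, vanishing on $\{y=0\}$, factors as $F_{\mathrm{anti}}(x,y)=yH(x,y)$ for an entire $H$. The non-uniform flatness at the pinch-pairs $p_n^\pm:=(1/\delta_n,\pm\delta_n)$, where $|p_n^+-p_n^-|=2\delta_n\searrow 0$, forces $H(p_n^+)\approx f(p_n^+)/\delta_n$, so the cost of any extension in $\sB_2(\vp)$-norm grows without bound relative to $\|f\|_{\fB_1(C_1,\vp)}$.

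For the standard weight $\vp=|z|^2$ this can be carried out completely explicitly.  Taking $f|_{C_{1\pm}}=\pm g(t)$ for an even entire function $g(u)=\sum_{k\text{ even}} a_k u^k$, expanding the generic antisymmetric extension as $F_{\mathrm{anti}}=\sum_{\ell\ge 0}c_\ell(x)y^{2\ell+1}$ with entire $c_\ell$, and matching the restriction on $C_1$ via Laurent series in $x$ leads to equations that decouple over the $k$-modes.  Orthogonality of Bargmann--Fock monomials then makes the minimum-norm antisymmetric extension explicit and yields, up to constants independent of $g$,
\[
\|F_{\mathrm{anti}}\|_{\sB_2(\vp)}^2 \ \asymp\ \sum_{k\text{ even}} |a_k|^2 (k+1)! \qquad\text{while}\qquad \|f\|_{\fB_1(C_1,\vp)}^2 \ \asymp\ \sum_{k\text{ even}} |a_k|^2 k!.
\]
Choosing $|a_k|^2 k!=1/(k\log^2 k)$ makes $\|f\|^2<\infty$ but the lower bound diverges as $\sum 1/\log^2 k =\infty$.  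Since $|z|^2$ is $\sigma$-invariant, $\|F\|^2=\|F_{\mathrm{sym}}\|^2+\|F_{\mathrm{anti}}\|^2\ge\|F_{\mathrm{anti}}\|^2$, and every extension fails to lie in $\sB_2(\vp)$.

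The main obstacle will be extending this to a general Bargmann--Fock weight $\vp$, where the explicit monomial calculation and the Pythagorean split of $\|F\|^2$ both fail.  The natural remedy is to pass to the $\sigma$-symmetrization $\tilde\vp:=(\vp+\vp\circ\sigma)/2$ (for which the split holds) and replace monomials by peak functions of the weighted Bergman kernels of $\fB_1(C_{1+},\tilde\vp)$, using \eqref{bf-weight-hyp} to bound the local discrepancy $|\vp-\tilde\vp|$ on each Bergman ball and thereby transfer the obstruction estimate back to the $\vp$-norm up to bounded multiplicative factors.
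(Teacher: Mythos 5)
Your treatment of the standard weight $\vp(z)=|z|^2$ is correct in outline and genuinely different from the paper's. The paper argues by contradiction: it uses H\"ormander's theorem to build a family $f_\delta$ with $\|f_\delta\|^2\lesssim \delta^{-1/2}$ whose weighted modulus is $\sim 1$ at $(\delta^{-1},\delta)$ and $0$ on $C_{1-}$, invokes the closed--graph theorem (Proposition \ref{surj-has-bdd-inverse}) to obtain a uniformly bounded minimal extension, and then contradicts the Lipschitz estimate \eqref{c1-bi} of Proposition \ref{w-Bergman-ineq} because the two points are only $2\delta$ apart. Your monomial computation instead exhibits one explicit $f$ all of whose extensions have infinite norm, bypassing the closed--graph step; the key points check out, since each coefficient $b_{m,2\ell+1}$ of an antisymmetric extension enters exactly one Laurent constraint, and $\sum_{\ell\ge 0}\bigl((k+2\ell+1)!\,(2\ell+1)!\bigr)^{-1}\le 2/(k+1)!$, giving the lower bound $\|F_{\rm anti}\|^2\gtrsim\sum_k|a_k|^2(k+1)!$ against $\|f\|^2\asymp\sum_k|a_k|^2k!$.

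The gap is in the general case, and it is not merely technical. Only $dd^c(\vp-\vp\circ\sigma)$ is bounded; the function $\vp-\tilde\vp$ itself typically has linear-times-linear (i.e.\ quadratic) growth and is not pluriharmonic, so it cannot be absorbed into a holomorphic factor. For instance $\vp(x,y)=|x|^2+|x-y|^2$ satisfies \eqref{bf-weight-hyp}, yet $\vp-\vp\circ\sigma=-4\re (x\bar y)$ is unbounded with nonvanishing $dd^c$. Consequently $\sB_2(\vp)$ and $\sB_2(\tilde\vp)$ are not quasi-isometric, the Pythagorean splitting fails for $\vp$, and controlling the ``local discrepancy on each Bergman ball'' cannot restore a global norm comparison: the oscillation of $\vp-\tilde\vp$ on a unit ball at distance $R$ from the origin is $O(R)$, not $O(1)$. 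Even for $\sigma$-invariant but non-radial weights, monomial orthogonality and the explicit minimal-norm formula disappear, and ``peak functions of the weighted Bergman kernel'' do not by themselves yield the quantitative divergence of the ratio (antisymmetric extension norm)/(trace norm) that your argument requires. The paper's substitute for all of this is weight-robust: the QuimBo normalization (Lemma \ref{weight-normalization}) feeding a H\"ormander construction of a function on $C_{1+}$ of norm $O(\delta^{-1/2})$ that peaks at the pinch point, combined with the weight-independent gradient bound \eqref{c1-bi}. You need some analogue of that mechanism to go beyond $\sigma$-invariant weights.
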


\begin{thm}\label{main-C2}
Let $\vp \in \sC ^2 (\bbC^2)$ satisfy \eqref{bf-weight-hyp}.  Then the restriction map $\sR_{C_2}: \sB_2 (\vp) \to  \fB _1 (C_2, \vp)$ is bounded and surjective.
\end{thm}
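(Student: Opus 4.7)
The proof splits naturally into boundedness and surjectivity of $\sR_{C_2}$. Boundedness is routine: the Bargmann--Fock submean-value inequality $|F(p)|^2 e^{-\vp(p)} \lesssim \int_{B(p,1)} |F|^2 e^{-\vp}\, dV$, combined with the uniform boundedness of $|C_2 \cap B(q,1)|$ in $q$ (immediate since $C_2$ is algebraic of degree three), yields $\|F|_{C_2}\|_{\fB_1(C_2, \vp)} \lesssim \|F\|_{\sB_2(\vp)}$ by Fubini.

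The substantive content is surjectivity, and the plan is to exploit the holomorphic involution $\sigma(x, y) := (x, -y)$ that preserves $C_2$ setwise (since $xy^2 = x(-y)^2$). Via the $2$-to-$1$ branched cover $\Phi(x, y) := (x, y^2) =: (x, u)$, the involution $\sigma$ is the deck transformation and $C_2$ is sent onto the \emph{uniformly flat} smooth algebraic hyperbola $C_1' := \{xu = 1\}$, which is algebraic and therefore has zero asymptotic upper density, so that Theorem~\ref{interp-thm-pv-osv} applies to $C_1'$. Given $f \in \fB_1(C_2, \vp)$, I would decompose $f = f_e + f_o$ with $f_{e/o} := (f \pm f \circ \sigma)/2$; the $\sigma$-invariance of $f_e$ and $\sigma$-anti-invariance of $f_o$, together with the fact that $y \ne 0$ on $C_2$, let one define $h_e, h_o \in \mathcal{O}(C_1')$ by $h_e(x, y^2) := f_e(x, y)$ and $y\, h_o(x, y^2) := f_o(x, y)$. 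After extending these to $H_e, H_o$ on $\bbC^2_{(x, u)}$, the lift $F(x, y) := H_e(x, y^2) + y H_o(x, y^2)$ is a holomorphic extension of $f$ to $\bbC^2$ by construction.

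To bound $\|F\|_{\sB_2(\vp)}$, the change of variables $u = y^2$ converts the defining integral into $L^2$-integrals of $|H_e|^2$ and $|H_o|^2$ in $(x, u)$-coordinates against the pushforward weights
\[
e^{-\psi_e(x,u)} := \frac{e^{-\vp(x, \sqrt u)} + e^{-\vp(x, -\sqrt u)}}{4|u|}, \qquad e^{-\psi_o(x,u)} := \frac{e^{-\vp(x, \sqrt u)} + e^{-\vp(x, -\sqrt u)}}{4}
\]
(plus a cross-term bounded by Cauchy--Schwarz), and a parallel computation relates $\|f\|^2_{\fB_1(C_2, \vp)}$ to the analogously weighted $L^2$-norms of $h_e, h_o$ on $C_1'$. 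The ideal application of Theorem~\ref{interp-thm-pv-osv} would therefore use $\psi_e, \psi_o$ as the ambient weights on $\bbC^2_{(x, u)}$.

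The main obstacle is that neither $\psi_e$ nor $\psi_o$ is a Bargmann--Fock weight. The branching locus $\{u = 0\}$ contributes a $-\log|u|$ singularity to $\psi_e$, and even after ignoring that, the substitution $y = \sqrt u$ turns the quadratic $y$-growth of $\vp$ into merely linear $|u|$-growth (since $|y|^2 = |u|$ on the branch), violating the upper $dd^c$-bound that a Bargmann--Fock weight must satisfy. I see two potential routes around this: either regularize $|u|$ by $\sqrt{\ve^2 + |u|^2}$ to obtain genuine $\sC^2 \cap \mathrm{PSH}$ weights $\psi_e^\ve, \psi_o^\ve$, apply Theorem~\ref{interp-thm-pv-osv} with $\ve$-uniform constants (which demands a density calculation on $C_1'$), and pass to a weak limit as $\ve \to 0$; or invoke a more general $L^2$-extension theorem---a version of Theorem~\ref{ot-tak}---tolerating plurisubharmonic weights with these specific singularities on the Stein manifold $\bbC^2 \setminus \{u = 0\}$, where $\psi_e, \psi_o$ are smooth and $C_1'$ lies, followed by Hartogs/Riemann extension of $H_e, H_o$ across $\{u = 0\}$, which is possible because $1/|u|$ is locally integrable.
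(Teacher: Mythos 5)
Your double-cover idea is genuinely different from the paper's argument and is geometrically attractive: the branched cover $(x,y)\mapsto(x,y^2)$ identifies exactly the problematic pairs $(\delta^{-2},\pm\delta)$ and folds $C_2$ onto the uniformly flat hyperbola $\{xu=1\}$, with the even/odd decomposition and division by $y$ providing the correct bookkeeping. (The paper goes the opposite way: it realizes $C_2$ as the uniformly flat slice $S\cap\{z=1\}$ of $S=\{z=xy^2\}\subset\bbC^3$, extends from $C_2$ to $S$ via Theorem \ref{ot-tak} after modifying the weight to absorb ${\rm Ricci}(\omega_o|_S)$, extends from $S$ to $\bbC^3$ by Theorem \ref{main-S}, and restricts to $\bbC^2\times\{1\}$.) But as written your argument has a genuine gap, and you have located it yourself: the extension step on $\bbC^2_{(x,u)}$ is where the entire difficulty of the theorem now lives, and you leave it as a choice between two unexecuted ``potential routes.'' The degeneracy is not a removable technicality. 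In the $u$-direction one has $dd^c\psi_{e/o}\sim\tfrac{m}{4|u|}\,\ii\,du\wedge d\bar u$, which tends to $0$ as $|u|\to\infty$; this is precisely the image under the cover of the failure of uniform flatness of $C_2$, so no regularization restores the two-sided bound \eqref{bf-weight-hyp} and Theorem \ref{interp-thm-pv-osv} is not applicable. What is required is a direct verification of the curvature hypothesis \eqref{curv-ot} of Theorem \ref{ot-tak} for the degenerate weight, i.e.\ a pointwise comparison of $dd^c\psi_o$ with the mass tensor $\Upsilon^{C_1'}_r$. This is plausible --- near $(1/R,R)$ the hyperbola is nearly tangent to the $u$-axis, so $\Upsilon_r$ concentrates in the $x$-direction, where $dd^c\psi_o\gtrsim m$ --- but it is the heart of the proof and cannot be omitted.

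There is also a difficulty you do not flag. Writing $a=\vp(x,\sqrt u)$, $b=\vp(x,-\sqrt u)$, $p=e^{-a}/(e^{-a}+e^{-b})$, $q=1-p$, your weight satisfies
\[
dd^c\psi_o \;=\; p\,dd^c a + q\,dd^c b \;-\; pq\,\ii\,\di(a-b)\wedge\dbar(a-b),
\]
a ``soft-minimum'' whose last term is negative. Since $a-b=\vp(x,\sqrt u)-\vp(x,-\sqrt u)$ is in general unbounded, this term can overwhelm the positive part: for $\vp=|x|^2+|y|^2+\re(\lambda y)$ (which satisfies \eqref{bf-weight-hyp} with $m=M=1$, since the added term is pluriharmonic) the positive contribution in the $u$-direction is $\tfrac{1}{4|u|}$ while the negative one reaches $\tfrac{|\lambda|^2}{16|u|}$, so $\psi_o$ fails to be plurisubharmonic once $|\lambda|>2$. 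The averaged weight $\tfrac{a+b}{2}$ is plurisubharmonic but, by the same unboundedness of $a-b$, $e^{-(a+b)/2}$ does not dominate $e^{-a}+e^{-b}$ from below, so it cannot be substituted without losing the norm identities. Your plan therefore closes only for weights essentially even in $y$ (in particular the standard weight); for general Bargmann--Fock $\vp$ the choice of a usable weight on the quotient is itself unresolved.
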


Next consider the smooth complex surfaces 
\[
S = \{ (x,y,z) \in \bbC ^3\ ;\ z= xy^2\} \quad \text{and} \quad \Sigma  = \{ (x,y,z) \in \bbC ^3\ ;\ z= x^2y^2\}.
\]
The surface $S$ and $\Sigma$ are both graphs over $\bbC ^2$, and hence embed in $\bbC ^3$ by the maps 
\begin{equation}\label{S-graph-emb}
\Phi (s,t) := (s,t,st^2) \quad \text{and} \quad \Psi (s,t) = (s,t,s^2t^2)
\end{equation}
respectively.  Unlike the curve $C_1$, both $S$ and $\Sigma$ are connected.   

Like the curves $C_1$ and $C_2$, the surfaces $S$ and $\Sigma$ are also not uniformly flat.  Heuristically speaking, by intersecting with the planes $x =c$ and letting $c \to \infty$, one obtains more and more eccentric parabolas.  More precisely, for $0<\delta <1$ the disks perpendicular to $S$ at the points $p_{\pm} := (\delta ^{-1},\pm \delta, \delta)$ intersect at the point $I:= (\delta ^{-1}-\frac{\delta^3}{2}, 0, \delta + \frac{\delta}{2})$, and 
\[
|I -p_{\pm}| = |(\delta ^3/2, \pm \delta , -\delta/2)| < 2 \delta. 
\]
Thus the neighborhood $N_{\ve}(S)$ is not a tubular neighborhood for any constant $\ve > 0$.  Similar considerations apply to $\Sigma$.

\begin{thm}\label{main-S}
Let $\vp \in \sC ^2 (\bbC^3)$ satisfy \eqref{bf-weight-hyp}.  Then the restriction map $\sR _S : \sB _3(\vp) \to \fB_2(S,\vp)$ is bounded and surjective.  
\end{thm}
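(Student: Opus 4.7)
The key observation is that $S$ is the image of the hyperplane $H := \{w=0\} \subset \bbC ^3$ under the polynomial automorphism
\[
T: \bbC ^3 \to \bbC ^3,\qquad T(x,y,w) = (x, y, w+xy^2),
\]
whose inverse $T^{-1}(x,y,z) = (x,y,z-xy^2)$ is also polynomial and whose complex Jacobian equals $1$ at every point. Hence $T$ preserves Lebesgue measure, and the pullback weight $\tilde \vp := \vp \circ T$ is plurisubharmonic on $\bbC ^3$. Note that $\tilde \vp$ is \emph{not} Bargmann-Fock: its Hessian satisfies $m\, T^* \omega _o \le dd^c \tilde \vp \le M\, T^* \omega _o$, and the smallest eigenvalue of $T^*\omega_o$ decays like $(1+|xy|^2)^{-1}$ as $|xy|\to \infty$. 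Fortunately, only plurisubharmonicity of $\tilde \vp$ is needed in the main step.

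For surjectivity, given $f \in \fB _2(S,\vp)$ define $g$ on $H$ by $g(x,y,0) := f(x,y,xy^2)$. Using the graph parametrization $\Phi(s,t)=(s,t,st^2)$, a direct computation gives $\Phi^*(\omega_o^2/2!)=J\,dV_{st}$ with $J(s,t) := 1 + |t|^4 + 4|st|^2 \ge 1$, and therefore
\[
\int _H |g|^2 e^{-\tilde \vp |_H}\, dV_{xy} \;\le\; \int _{\bbC ^2} |f\circ \Phi|^2 e^{-\vp \circ \Phi}\, J\, dV_{st} \;=\; \|f\|^2_{\fB _2(S,\vp)}.
\]
Applying the Ohsawa-Takegoshi $L^2$ extension theorem on $(\bbC ^3, \tilde \vp)$ extends $g$ from $H$ to $G \in \cO(\bbC ^3)$ with $\int |G|^2 e^{-\tilde \vp}\, dV \le C \|g\|^2$. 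Setting $F := G \circ T^{-1}$, one has $F(x,y,xy^2)=G(x,y,0)=g(x,y,0)=f(x,y,xy^2)$, so $F|_S = f$, and the unit-Jacobian change of variables $z \mapsto w = z - xy^2$ yields
\[
\|F\|^2_{\sB _3(\vp)} = \int _{\bbC ^3}|G|^2 e^{-\tilde \vp} \, dV \le C\|f\|^2_{\fB _2(S,\vp)}.
\]

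For boundedness of $\sR _S$ in the opposite direction, I would use the standard pointwise sub-mean value inequality valid in Bargmann-Fock spaces:
\[
|F(z)|^2 e^{-\vp (z)} \;\le\; C \int _{B(z,1)} |F|^2 e^{-\vp}\, dV, \qquad z \in \bbC ^3,
\]
with $C = C(m,M)$, proved by multiplying $F$ by the exponential of the degree-$2$ holomorphic Taylor polynomial of $\vp$ at $z$ to flatten the weight on the unit ball. Integrating over $p \in S$ against the induced area form and swapping the order of integration,
\[
\|F|_S\|^2_{\fB _2(S,\vp)} \;\le\; C \int _{\bbC ^3} |F|^2 e^{-\vp}\, \area _S\!\bigl(S \cap B(z,1)\bigr)\, dV(z).
\]
Boundedness therefore reduces to the uniform estimate $\sup _{z \in \bbC ^3} \area _S(S \cap B(z,1)) < \infty$, which holds because $S$ is an algebraic hypersurface of degree three, by the classical Crofton/Wirtinger-type bound on areas of complex algebraic subvarieties in Euclidean balls; alternatively, it can be verified directly from $\Phi$ by a short case analysis exploiting that the polynomial constraint $|st^2 - z_0| < 1$ strongly restricts one of $|s|$, $|t|$ whenever the other is large.

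The conceptual obstacle, which happily does not materialize here, is that the pulled-back weight $\tilde \vp$ loses its upper Hessian bound; any argument needing $\tilde \vp$ to remain Bargmann-Fock would fail. We sidestep this by invoking Ohsawa-Takegoshi only in its plurisubharmonic form, and by carrying out the trace estimate for boundedness on the $(\bbC ^3,\vp)$-side, where the Bargmann-Fock hypothesis is in force.
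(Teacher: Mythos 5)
The boundedness half of your argument is fine and is essentially the paper's Proposition \ref{bdd-rest-S}. The surjectivity half has a fatal gap at the one step you treat as routine: invoking Ohsawa--Takegoshi ``in its plurisubharmonic form'' on $(\bbC^3,\tilde\vp)$. No such form exists on an unbounded domain --- the versions requiring only plurisubharmonicity are for bounded pseudoconvex domains, with constant depending on the diameter (and indeed, $L^2$ extension from a hyperplane in $\bbC^n$ with a weight that is merely psh is simply false: take $\tilde\vp$ independent of the normal variable). On $\bbC^3$ one must verify the curvature hypothesis \eqref{curv-ot} of Theorem \ref{ot-tak}, i.e.\ $dd^c\tilde\vp\ge(1+ts)\,dd^c\lambda$ for some metric $\lambda$ of the divisor $H$ with $\sup_{\bbC^3}|w|^2e^{-\lambda}\le 1$, and it is exactly this hypothesis that the shear destroys. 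Any admissible $\lambda$ satisfies $\lambda\ge\log|w|^2$, so $dd^c\lambda$ must carry a definite amount of positivity in the $w$-direction near $H$; for the natural choice (the average of $\log|w|^2$ over $r$-disks) one has $dd^c\lambda\approx(\pi r^2)^{-1}\mathbf{1}_{\{|w|<r\}}\,\ii\,dw\wedge d\bar w$. But for $v=-y^{-2}\di_x+\di_w$ a direct computation gives $T^*\omega_o(v,\bar v)=|y|^{-4}$, hence $dd^c\tilde\vp(v,\bar v)\le M|y|^{-4}\to 0$ while $dd^c\lambda(v,\bar v)\gtrsim r^{-2}$ at points of $\{|w|<r\}$ with $|y|$ large: the hypothesis fails. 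The decisive check that this is not repairable by cleverness alone: your argument uses nothing about $xy^2$ beyond its being a polynomial, so applied verbatim to the shear $(x,y,w)\mapsto(x,y,w+x^2y^2)$ it would prove that $\Sigma$ is interpolating, contradicting Theorem \ref{main-sigma}.

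The degeneration you set aside as a ``conceptual obstacle that does not materialize'' is in fact the entire content of the theorem. If you repair the curvature hypothesis by taking $\lambda$ to be the pullback under $T$ of the weight \eqref{lambda-S-defn}, then the $L^2$ norm of $g$ on $H$ demanded by Theorem \ref{ot-tak} becomes exactly the paper's integral \eqref{ot-subvar-norm}, and Lemma \ref{lambda-lemma} shows this exceeds $\|f\|^2_{\fB_2(S,\vp)}$ by the factor $(1+|x|^2+|y|^2+4|xy|^2+|y|^4)/(1+4|xy|^2+|y|^4)$, which is unbounded as $y\to 0$, $|x|\to\infty$. So the straightening changes nothing: the obstruction migrates from the curvature hypothesis to the norm on the hypersurface, but it does not go away. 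This is why the paper cannot apply the extension theorem globally and instead splits $S$: on $\{|y|>\delta\}$ the surface is uniformly flat and Theorem \ref{ot-tak} applies directly; near the line $L_o=\{y=z=0\}$ one first subtracts an extension of $f|_{L_o}$ (Proposition \ref{extend-from line}), divides the remainder by $y$ within $\fB_2(S,\vp)$ (Lemma \ref{div-y-lem}) to absorb the extra growth of $e^{\lambda}$, extends, and then patches the two local extensions with H\"ormander's theorem. Some mechanism of this kind, exploiting that the non-flat locus of $S$ concentrates along a thin interpolating subvariety, is what distinguishes $S$ from $\Sigma$; the shear alone cannot see the difference.
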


\begin{thm}\label{main-sigma}
Let $\vp \in \sC ^2 (\bbC^3)$ satisfy \eqref{bf-weight-hyp}.  Then there exists $f \in \fB _2 (\Sigma, \vp)$ such that any holomorphic extension $F$ of $f$ to $\bbC ^3$ does not lie in $\sB _3(\vp)$.
\end{thm}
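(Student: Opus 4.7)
I use the global parametrization $\Psi(s,t) = (s,t,s^2t^2)$ to identify $\Sigma$ with $\bbC^2$: the pullback of $|z|^2$ becomes $|s|^2+|t|^2+|s|^4|t|^4$, and the induced volume form carries the Jacobian $1+4|s|^2|t|^2(|s|^2+|t|^2)$. The $S^1$-action $(x,y,z)\mapsto (e^{i\theta}x,e^{-i\theta}y,z)$ preserves both $\Sigma$ and the standard weight $|z|^2$, so the restriction map $\sR$ is $S^1$-equivariant. I will construct a non-extendable $f$ in the trivial-character subspace, of the form $f(s,t) = \sum_n a_n(st)^n$.

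For $\vp_0 = |z|^2$, the trivial character of $\sB_3(\vp_0)$ has orthogonal basis $\{(xy)^a z^b\}_{a,b\geq 0}$ with $\|(xy)^az^b\|^2 = \pi^3(a!)^2 b!$, and that of $\fB_2(\Sigma,\vp_0)$ has orthogonal basis $\{(st)^n\}_{n\geq 0}$. The restriction sends $(xy)^a z^b \mapsto (st)^{a+2b}$, so the only monomials extending $(st)^n$ are $(xy)^{n-2b}z^b$ for $0\leq b\leq\lfloor n/2\rfloor$. A Lagrange multiplier (or Cauchy--Schwarz) computation gives the $L^2$-minimal extension of $(st)^n$ with squared norm $\pi^3/S_n$, where
\[
S_n := \sum_{b=0}^{\lfloor n/2\rfloor}\frac{1}{((n-2b)!)^2\,b!}.
\]
Optimal extensions for distinct $n$ lie in mutually orthogonal monomial subspaces of $\sB_3(\vp_0)$, so the minimum extension of $f = \sum a_n(st)^n$ has squared norm $\pi^3\sum |a_n|^2/S_n$. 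The task reduces to producing $(a_n)$ with $\sum|a_n|^2 A_n<\infty$ but $\sum|a_n|^2/S_n = \infty$, where $A_n := \|(st)^n\|^2_\Sigma$; this is possible provided $A_n S_n$ is not bounded below.

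The analytic heart of the proof is a careful asymptotic comparison of $A_n$ and $1/S_n$. The integrand of $A_n$ in polar coordinates has a saddle at $r=\rho=(n/2)^{1/8}$; passing to $(u,v) = (r^2,\rho^2)$ the Hessian of the log-integrand at this point is rank one with zero eigenvalue along $(1,-1)$. Combining a fourth-order expansion along the degenerate direction with Gaussian integration transverse to it yields an asymptotic of the form $A_n \sim c_1 n^\alpha (n/(2e))^{n/2}\,e^{-2(n/2)^{1/4}}$. A parallel Stirling analysis of $S_n$ (whose summand peaks near $b\approx (n-(n/2)^{1/4})/2$) gives $S_n \sim c_2 n^\beta (n/(2e))^{-n/2}\,e^{2(n/2)^{1/4}}$. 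The subexponential factors $e^{\pm 2(n/2)^{1/4}}$ cancel in $A_n S_n$, and a careful accounting of the polynomial prefactors confirms $A_n S_n \to 0$. One then chooses $(a_n)$ supported on a subsequence $n_k$ with $A_{n_k}S_{n_k}\leq 2^{-k}$ and $|a_{n_k}|^2 = (k^2 A_{n_k})^{-1}$ to obtain the desired $f$.

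For a general $\vp$ satisfying \eqref{bf-weight-hyp}, the exact orthogonal splittings above are no longer available; however, quantitative Bergman-kernel estimates of the form $K_\vp(z,z)\asymp e^{\vp(z)}$, with constants depending only on $m,M$, reproduce the same norm comparisons up to multiplicative constants, so an analogous construction with modified coefficients works. The principal obstacle is the degenerate Laplace analysis for $A_n$: the vanishing of the transverse Hessian forces a fourth-order expansion in that direction, and the subexponential factors $e^{\pm 2(n/2)^{1/4}}$ in $A_n$ and $1/S_n$ must be cancelled against each other exactly, which requires close attention to polynomial prefactors. Transferring the conclusion from the standard weight to an arbitrary Bargmann--Fock weight is likewise subtle and requires quantitative Bergman-kernel comparisons.
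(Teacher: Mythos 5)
Your reduction to the trivial character is set up correctly (the minimal extension of $(st)^n$ does have squared norm $\pi^3/S_n$, and the extensions for distinct $n$ are orthogonal), but the central analytic claim --- that $A_nS_n\to 0$ --- is false, and with it the whole construction collapses. Carrying out the Laplace analysis correctly, one finds $A_nS_n\to\pi^2$, so that the minimal extension of every $f=\sum a_n(st)^n$ with $\sum|a_n|^2A_n<\infty$ has squared norm $\pi^3\sum|a_n|^2/S_n\le C\sum|a_n|^2A_n<\infty$: the $S^1$-invariant sector is \emph{uniformly extendable} and contains no witness to non-interpolation. The error is in the degenerate saddle analysis of $A_n=\pi^2\int u^nv^ne^{-u-v-(uv)^2}(1+4uv(u+v))\,du\,dv$. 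Along the straight line $u=u_0+w$, $v=u_0-w$ the integrand does decay like $e^{-w^2/u_0-2w^4}$ (the transverse eigenvalue is $-1/u_0$, small but not zero), which would suggest an $O(1)$ transverse width; but the valley of the integrand is not straight --- it follows the curve $u+v\approx 2u_0+w^2/u_0$, along which the quartic term disappears and the decay is only $e^{-w^2/u_0}$, giving transverse width $\asymp u_0^{1/2}\asymp n^{1/8}$. One sees this cleanly in the coordinates $q=uv$, $\tau=\log(u/v)$, where the $\tau$-integral is exactly $4K_0(2\sqrt q)+32q^{3/2}K_1(2\sqrt q)\sim 16\sqrt\pi\,q^{5/4}e^{-2\sqrt q}$; the result is $A_n\sim 4\sqrt2\,\pi^3(n/2)^{5/8}(n/(2e))^{n/2}e^{-2(n/2)^{1/4}}$, while the Stirling analysis of $S_n$ (peak term $\asymp (a^*\sqrt n)^{-1}$, about $\sqrt{a^*}$ contributing terms, $a^*\sim(n/2)^{1/4}$) gives $S_n\sim\bigl(4\sqrt2\,\pi\bigr)^{-1}(n/2)^{-5/8}(n/(2e))^{-n/2}e^{2(n/2)^{1/4}}$. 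The extra factor $n^{1/8}$ from the curved valley is precisely what turns your claimed $A_nS_n\to0$ into $A_nS_n\to\pi^2$. As a sanity check, the identical computation for the uniformly flat (hence interpolating) parabola $\{y=x^2\}\subset\bbC^2$ gives $A_n'S_n'\to\pi$, the same ratio as for a hyperplane.

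There is also a conceptual reason your sector cannot work: the failure of uniform flatness of $\Sigma$ is witnessed by the pairs $(\delta^{-1},\pm\delta,\delta^2)$, which lie on the two branches $st=1$ and $st=-1$; in the invariant coordinate $q=st$ these are far apart, so an $S^1$-invariant function is never forced to oscillate on the scale $\delta$, and the invariant sector behaves exactly like a flat hypersurface. The obstruction lives in high character sectors (functions separating $\{st=1\}$ from $\{st=-1\}$ near infinity). This is exactly what the paper's proof exploits, by an entirely different route: it shows (Theorem \ref{C1-to-Sigma}) that the disconnected curve $C_1=\Sigma\cap\{z=1\}$ admits bounded $L^2$ extension into $\Sigma$ via Theorem \ref{ot-tak}, so that if $\Sigma$ were interpolating, the non-extendable function on $C_1$ produced by Theorem \ref{main-C1} could be extended to $\bbC^3$ and restricted to $\bbC^2\times\{1\}$, a contradiction. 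Finally, even if your standard-weight computation had succeeded, the transfer to a general weight satisfying \eqref{bf-weight-hyp} via ``$K_\vp(z,z)\asymp e^{\vp(z)}$'' is not a proof: on-diagonal kernel bounds do not by themselves transfer a non-surjectivity statement between the two restriction maps.
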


One might wonder what feature of the curve $C_2$ makes it interpolating, while $C_1$ is not interpolating.  In the case of $C_1$, two points with intersecting small orthogonal disks are always infinitely far apart in $C_1$, in the sense that they cannot be connected by a path.  On the other hand, the curve $C_2$ is more confusing:  the points $(\delta ^{-2}, \pm \delta)$ are rather far apart on $C_2$, but as it turns out, not far enough apart. 

As for the surface $S$, any two points with intersecting small orthogonal disks are always very close together in $S$, and moreover all such points are confined to a small neighborhood of the line $\{y=z=0\}$, which is a uniformly flat complex analytic submanifold of $\bbC^3$.  It is this feature of $S$ that makes it manageable. 

\begin{rmk}\label{not-flat-affine-graphs}
In \cite[p.87]{osv} a claim was made that ``\emph{it is not hard to see that}" the graph in $\bbC ^n$ of any polynomial in $n-1$ variables is uniformly flat.  Obviously $S$ is a counterexample to this claim when $n \ge 3$.  That being said, curves in $\bbC ^2$ that are graphs of polynomials in one complex variable are uniformly flat. (The curve $C_2$ shows that this is not the case for graphs of rational functions.)
\red
\end{rmk}

\subsection{Path to enlightenment} \label{light-ss}

Our struggles with Theorems \ref{main-C1}-\ref{main-sigma} compel us to tell story of our trajectory in establishing their proofs.

The surface $S$ was the first that we considered, and it came up precisely in the context of Remark \ref{not-flat-affine-graphs}.  Our initial expectation was that $S$ would not be interpolating, but we had difficulty writing down a proof.  In the meantime, since the surface $S$ was so hard to understand, we considered the curve $C_2$ in the hopes that it would provide a more manageable example.  We tried in several ways to prove that $C_2$ was not interpolating, not knowing it was impossible to do so.  Eventually we simplified things even further to the curve $C_1$, and finally we were successful in showing that $C_1$ was not interpolating.  

Eventually we realized that $S$ was indeed interpolating, and that the reason had to do with the fact that the non-flat regions were concentrated near the line $\{y=z=0\}$, which is a small, and interpolating, subset of $\bbC ^3$.  This was the key to the proof of Theorem \ref{main-S}.

Yet even after knowing that $S$ is interpolating, we continued to try to show that $C_2$ is not interpolating.  The rationale was that, in the plane, if two points on an algebraic curve are very close in the ambient space, they must be very far apart with respect to the distance induced by the Euclidean metric on the curve.  And this is indeed the case for $C_2$.  (We will explain later why having points that are far apart in the curve but close together in the ambient space could lead to a contradiction to interpolation.)  As it turned out, the pairs of non-flat points were not far enough apart for our approach to work.  So we started to wonder if perhaps $C_2$ was interpolating after all.  With this psychological shift, things changed quickly.

It occurred to us that $C_2$ can be seen as a uniformly flat subset of $S$, since it is cut out from $S$ by the plane $\{z=1\}$.  We conjectured that perhaps data from $C_2$ could be extended to $S$.  This turned out to be the case, and from there on it was clear how to extend data from $C_2$ to $\bbC ^2$:  extend the data to $S$, then extend the data on $S$ to $\bbC ^3$, and finally restrict to $\bbC ^2 \times \{1\}$.

After seeing that, unlike $C_2$, $C_1$ is not connected, we wondered if every smooth connected affine algebraic hypersurface is an interpolation hypersurface.  But by this point we had gained enough experience so as not to be easily led astray.  We realized that, like $C_2$ inside $S$, the curve $C_1$ is uniformly flat inside $\Sigma$.  If $\Sigma$ were interpolating, then we could extend data from $C_1$ to $\Sigma$ and then from $\Sigma$ to $\bbC ^3$, after which restriction to the plane $\bbC ^2 \times \{1\}$ yields a contradiction to Theorem \ref{main-C1}.

Of course, all of these sketches are a little imprecise.  As the reader might agree, the details require considerable care.  

\subsection{More ideas behind the proofs}

After establishing the tools that are needed, the proof of Theorem \ref{main-C1} is presented first.  The idea is as follows.  One constructs a function in $\fB_2 (C_1, \vp)$ that is very large at the point $(\delta ^{-2}, \delta)$ and very small at the point $(\delta ^{-2}, -\delta)$. The function is built using H\"ormander's Theorem, but there is some subtlety regarding the curvature of the weights.  Thus, in addition to H\"ormander's Theorem, one makes use of a technique-- first introduced by Berndtsson and Ortega Cerd\`a \cite{quimbo}-- that is discussed in Section \ref{tech-tools}.  

The next result to be proof is Theorem \ref{main-S}.  For its proof, we exploit the $L^2$ Extension Theorem  (Theorem \ref{ot-tak}) to construct our extensions in two different open sets; one large open set where $S$ is uniformly flat, and another small open set where $S$ is not uniformly flat.  There is a difficulty in extending from the non-uniformly flat subset.  This difficulty is overcome by a reduction to extending functions that vanish along the set where uniform flatness is violated.  Finally, H\"ormander's Theorem is used to patch together these two extensions. 

Theorem \ref{main-C2} is deduced from Theorem \ref{main-S} by again exploiting the $L^2$ Extension Theorem.  As we already mentioned, a simple but important observation is that the curve $C_2$ is the intersection of $S$ with the plane $\{z=1\}$ in $\bbC ^3$.  If we can extend data from $C_2$ to $S$, then by Theorem \ref{main-S} we can extend the data to $\bbC ^3$, and then restrict it to $\bbC^2 \times \{1\} \cong \bbC ^2$.  Thus the difficulty is to extend from $C_2$ to $S$.  The key feature is that since the plane $\bbC ^2 \times \{1\}$ is (uniformly) flat in $\bbC^3$, one suspects that $C_2$ is uniformly flat when viewed from within $S$. 

Perhaps it should be noted that the most difficult part of proving Theorem \ref{main-C2} is guessing that it, rather than its converse, is true.  The points violating uniform flatness, i.e., $(\delta ^{-2}, \pm \delta)$, are rather far apart in $C_2$ (with respect to the Riemannian distance induced by the Euclidean metric on the surface) but rather close in the ambient space.  Therefore any interpolation problem from this pair of points into the curve $C_2$ can be solved, which means that one can find a function that is very large at $(\delta ^{-2},\delta)$ and vanishes at $(\delta ^{-2},- \delta)$.  The extension of such a function would have very large $L^2$ norm, since its gradient would be huge.  However, in order to have good control over the norm of the extension, one needs a lot of curvature from the curve, and the Bargmann-Fock condition \eqref{bf-weight-hyp} turned out simply be too much for a connected algebraic curve.

We feel confident enough to make the following conjecture.

\begin{conj}
A smooth connected affine algebraic curve in $\bbC ^2$ is interpolating for any Bargmann-Fock space.
\end{conj}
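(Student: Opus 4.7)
Our plan is to promote the strategy behind Theorem \ref{main-C2} to an arbitrary smooth connected affine algebraic curve $C \subset \bbC^2$. Write $C = \{P(x,y) = 0\}$ where $P \in \bbC[x,y]$ is chosen so that $dP$ does not vanish on $C$ (possible because $C$ is smooth), and form the graph surface
\[
S_P := \{(x,y,z) \in \bbC^3 : z = P(x,y)\}.
\]
The map $(x,y) \mapsto (x,y,0)$ realizes $C$ as the hyperplane slice $S_P \cap \{z = 0\}$. Since $\{z=0\}$ is uniformly flat in $\bbC^3$ and the projection $S_P \to \bbC^2$ is biholomorphic, a short calculation shows that $C$ is uniformly flat inside $S_P$ with respect to the K\"ahler metric induced from $\bbC^3$.

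With this set-up the conjecture reduces to a single statement about surfaces: \emph{the graph $S_P \subset \bbC^3$ is interpolating for every Bargmann-Fock weight whenever $\{P=0\}$ is smooth and connected.} Granting this sub-problem, the argument is parallel to the one used to deduce Theorem \ref{main-C2} from Theorem \ref{main-S}: given $f \in \fB_1(C, \vp)$, use the $L^2$ extension theorem together with the uniform flatness of $C \subset S_P$ to produce $\tilde f \in \fB_2(S_P, \vp + |z|^2)$ extending $f$; apply the assumed interpolation for $S_P$ to lift $\tilde f$ to $F \in \sB_3(\bbC^3, \vp + |z|^2)$; finally restrict $F$ to $\bbC^2 \times \{0\}$ to obtain the required extension in $\sB_2(\bbC^2,\vp)$.

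To attack the sub-problem itself, one would follow the scheme of Theorem \ref{main-S}: first, describe the non-uniformly-flat locus $\sN(S_P)$ of $S_P$ in terms of the Puiseux branches of $P$ at infinity; when $\{P=0\}$ is smooth and connected, one expects these branches to have a combinatorial structure (no multiple components) that confines $\sN(S_P)$ to an $\ve$-neighborhood of some uniformly flat algebraic subvariety $L \subset \bbC^3$. Next, extend from $S_P$ on the uniformly flat complement $S_P \setminus N_{\ve/2}(L)$ via Theorem \ref{interp-thm-pv-osv}, and extend separately across $N_\ve(L)$ by the localization-and-vanishing trick used for $S$. Finally, glue the two extensions with a cutoff and H\"ormander's theorem, using a Berndtsson--Ortega Cerd\`a twisted weight to absorb the curvature deficit produced by the cutoff.

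The main obstacle is the first step: producing a sharp geometric description of $\sN(S_P)$ and identifying the correct auxiliary set $L$ in general. The dichotomy between Theorems \ref{main-S} and \ref{main-sigma} shows that this description must record more than the set of branches at infinity --- their multiplicities matter --- and pinning down the correct sufficient condition on $P$ (connectedness of the generic level set, or irreducibility of a suitable reduction) is the real content of the conjecture. Once such a structural result is in place, the analytic machinery assembled in the proof of Theorem \ref{main-S}, together with the reduction above, should complete the proof.
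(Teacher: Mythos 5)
This statement is presented in the paper as an open conjecture; the authors offer no proof of it, so there is nothing to compare your argument against except the partial evidence the paper assembles (Theorems \ref{main-C1}--\ref{main-sigma}). Judged on its own terms, your proposal is a program rather than a proof, and it contains a genuine gap that you yourself half-acknowledge in the last paragraph. The reduction in your second step replaces the conjecture by the sub-claim that \emph{every} graph surface $S_P=\{z=P(x,y)\}$ with $\{P=0\}$ smooth and connected is interpolating for every Bargmann-Fock weight on $\bbC^3$. This sub-claim is at least as hard as the original conjecture, and it is not clear that your hypothesis is even the right one: the non-uniformly-flat locus of $S_P$ is governed by the asymptotic geometry of \emph{all} the level curves $\{P=c\}$ (equivalently, by the behaviour of $dP$ at infinity), not by the single slice $\{P=0\}$. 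The paper's own dichotomy makes the danger concrete: for $\Sigma=\{z=x^2y^2\}$ the non-flat locus concentrates along $\{xy=0,\ z=0\}$, whose natural neighbourhoods $\{|x|<\delta\}\cup\{|y|<\delta\}$ are not pseudoconvex, and the localization-and-gluing scheme of Theorem \ref{main-S} breaks down there irreparably --- indeed $\Sigma$ is \emph{not} interpolating. You would have to prove that connectedness (or irreducibility) of $\{P=0\}$ forces the non-flat locus of $S_P$ into a tubular neighbourhood of a uniformly flat, interpolating subvariety $L$ admitting a pseudoconvex neighbourhood basis; no such structural statement is established or even precisely formulated, and as you note, that is ``the real content of the conjecture.''

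A secondary, smaller gap: even the first step (extending from $C$ to $S_P$) is not automatic from ``$C$ is a hyperplane slice of $S_P$.'' In the paper's proof of Theorem \ref{C2-to-S} the curvature hypothesis of Theorem \ref{ot-tak} fails a priori because ${\rm Ricci}(\omega_o|_S)$ is negative, and it is rescued only by an ad hoc weight modification $\mu$ whose key property is the uniform bound $\sup_{C_2}\bigl(\mu-\log(1+|2xy|^2+|y^2|^2)\bigr)<\infty$, verified by hand from the parametrization of $C_2$. For a general $P$ you would need the analogous bound for $\log\bigl(1+|\di_xP|^2+|\di_yP|^2\bigr)$ along $C$, which again depends on the asymptotics of $dP$ on $C$ at infinity and requires proof. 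So the proposal is a reasonable roadmap consistent with the paper's philosophy, but both of its stages rest on unproven claims, and the statement remains open.
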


Finally, Theorem \ref{main-sigma} is deduced from Theorem \ref{main-C2} in a manner that is the mirror image of the deduction of Theorem \ref{main-C1} from Theorem \ref{main-S}.  One shows that the curve $C_2$, obtained from $\Sigma$ by intersection with the plane $\bbC ^2 \times \{1\}$, is interpolating for $\Sigma$.  If $\Sigma$ were interpolating for $\bbC ^3$ then the data from $C_2$ could be extended first to $\Sigma$ and then to $\bbC^3$, and then it could be restricted to $\bbC ^2 \times \{1\} \cong \bbC ^2$.  The result would contradict Theorem \ref{main-C2}.

\medskip 

\begin{ack}
The first named author is partially supported by  the Young Investigator Award and by grant F.510/25/CAS-II/2018(SAP-I) from UGC (Govt. of India).  The second named author is grateful to Takeo Ohsawa for his interest in the article and for his helpful advice.
\end{ack}

\section{Background on asymptotic density, uniform flatness, and interpolation}\label{review-of-interp}

The theory of interpolation from complex analytic hypersurfaces in $\bbC^n$ began its development in the early 1990s with the work of Kristian Seip and several other collaborators.  Seip considered the problem of interpolation and sampling from $0$-dimensional analytic subvarieties in $\bbC$, giving a negative answer to the following question that arose in solid state physics:  
\begin{center}
{\it Is there a lattice $\Lambda$ in $\bbC$ such that restriction map $\sR_{\Lambda} : \sB_1 \to \fB_o (\Lambda)$ is a bijection?}
\end{center}
\label{seip-work}
\noi Seip showed that in fact there is no closed discrete subset of $\bbC$ for which the restriction map is a bijection.  To prove this non-existence, Seip defined an adaptation, in the Bargmann-Fock space, of a notion of asymptotic upper and lower densities introduced by Beurling for Hardy spaces.  The definition of the upper density and lower density of a closed discrete subset $\Gamma$ is 
\[
D^+(\Gamma) = \limsup _{r \to \infty} \sup _{z \in \bbC} \frac{\# \Gamma \cap D_r (z)}{r^2} \quad \text{and} \quad D^-(\Gamma) = \liminf _{r \to \infty} \inf _{z \in \bbC} \frac{\# \Gamma \cap D_r (z)}{r^2}
\]
respectively.  Clearly the upper density of $\Gamma$ is always larger than the lower density of $\Gamma$.  Seip showed that if $\sR_{\Gamma}$ is injective then $D^- (\Gamma) > 1$ and that if $\sR_{\Gamma}$ is surjective then $D^+(\Gamma) < 1$, thus obtaining the negative answer to the above question.  Seip also showed that if $\sR _{\Gamma}$ is surjective then $\Gamma$ is uniformly separated in the Euclidean distance in $\bbC$, and that if $\sR _{\Gamma}$ is injective with closed range then $\Gamma$ is a finite union of uniformly separated sequences.  Conversely, Seip and Wallsten showed that if $\Gamma$ is uniformly separated and $D^+(\Gamma) < 1$ then $\sR_{\Gamma}$ is surjective, while if $\Gamma$ is a finite union of uniformly separated sequences $\Gamma _1,...,\Gamma _N$, such that $D^-(\Gamma_i) > 1$ for some $i$ then $\sR_{\Gamma}$ is injective with closed range.  Thus a rather complete picture is obtained: see \cite{seip-C,sw}.

The results of Seip and Wallst\'en for the standard Bargmann-Fock space were extended to general Bargmann-Fock spaces on $\bbC$ by Berndtsson and Ortega Cerd\`a (sufficiency) \cite{quimbo} and by Ortega Cerd\`a and Seip (necessity) \cite{quimseep}.  Other domains besides $\bbC$ have been considered, c.f. for example \cite{seip-disk,sv-rs,oc-rs,v-rs-flat,v-rs-2}, but the present article focuses on the Bargmann-Fock situation.  

Let $Z \subset \bbC^n$ be an analytic hypersurface.  For any such hypersurface there exist functions $T \in \cO (\bbC^n)$ such that $dT(p)\neq 0$ for at least one $p$ in every connected component of $Z$.  Such a function $T$ will be called a \emph{defining function} for $Z$.  Any two defining functions $T_1$ and $T_2$ for $Z$ are related by $T_2 = e^{f} T_1$ for some $f \in \cO (\bbC ^n)$.  

Given a defining function $T$ for $Z$, for each $r > 0$ we can define the function 
\[
\lambda ^T _r (z) := \frac{n!}{(\pi r^2)^n}\int _{B_r (z)} \log |T|^2 dV = \dashint _{B_r (z)} \log |T|^2 dV .
\]
Observe that if $\tilde T = e^f T$ is another defining function for $Z$ then 
\[
\lambda ^{\tilde T} _r  = 2 \re f  + \lambda ^T_r.
\]
It follows that the functions 
\[
\sigma_r ^Z := \log |T|^2 - \lambda ^T_r : \bbC ^n \to \bbR \cup \{-\infty\} \quad \text{and} \quad S^Z_r := |dT|^2 e^{-\lambda ^T_r} : Z \to [0, \infty),
\]
called the \emph{singularity} and the \emph{separation function} of $Z$, are independent of the defining function $T$, as is the locally bounded $(1,1)$-current 
\[
\Upsilon ^Z _r := dd^c \lambda ^T _r = \frac{n!}{(\pi r^2)^n} \mathbf{1}_{B_r(0)} * [Z],
\]
called the \emph{mass tensor} of $Z$.

Note that the mass tensor is a non-negative Hermitian $(1,1)$-form.  Its size is therefore governed by its trace $||\Upsilon^Z_r(z)||$ given by 
\[
||\Upsilon^Z_r(z)|| \omega_o ^n := n \Upsilon ^Z_r \wedge \omega _o ^{n-1} = \frac{\omega_o^n}{(n-1)!}\dashint _{B_r (z)} [Z] \wedge \omega _o ^{n-1}
\]
which is the ratio of the area of $Z \cap B_r (z)$ to the volume of $B_r(z)$.  It follows that for any $v \in \bbC ^n$ 
\begin{equation}\label{estimate-for-mass-tensor}
\Upsilon ^Z_r(z)(v,\bar v) \le \frac{\omega _o (v,\bar v)}{(n-1)!} \int _{\bbC ^n} [Z] \wedge \omega _o ^{n-1}.
\end{equation}

Next let $\vp \in \sC ^2 (\bbC ^n)$ be a Bargmann-Fock weight, i.e., a weight satisfying the bounds \eqref{bf-weight-hyp} on its curvature.  One can form the \emph{mean weight} $\vp _r \in \sC ^2 (\bbC ^2)$ defined by 
\[
\vp _r (z) := \dashint _{B_r (z)} \vp dV.
\]
As we will see below (Lemma \ref{w12-for-ddbar}) the weight $\vp$ can be written in $B_r (z)$ in the form 
\[
\vp(\zeta) = m ||\zeta - z||^2 + 2 \re g(\zeta) + \psi(\zeta)
\]
for some $g \in \cO (B_r(z))$, and some $\psi \in \sC^2(B_r (z))$ whose $\sC^1$-norm is bounded independent of $z$.  (In Lemma \ref{w12-for-ddbar} we have $m ||\zeta||^2$ rather than $m ||\zeta - z||^2$, but the proof is the same.)  It follows from Taylor's Theorem that
\[
\left | \vp (z) - \vp_r (z) \right | = \left | \dashint _{B_r (0)} (\vp (\zeta +z) - \vp (z) ) dV (\zeta) \right | \le C_r
\]
for some constant $C_r$ that is independent of $z$.  Thus the Hilbert spaces $\sB_n (\vp)$ and $\sB _n (\vp _r)$ are quasi-isometric, as are the Hilbert spaces $\fB_{n-1} (Z,\vp)$ and $\fB_{n-1} (Z,\vp_r)$.

\subsection{Asymptotic Density}

We can now generalize the notion of asymptotic upper and lower densities as follows.

\begin{defn}\label{density-defn}
Let $Z \subset \bbC ^n$ be a possibly singular analytic hypersurface.  
\begin{enumerate}
\item[(a)]  The asymptotic upper density of $Z$ with respect to the Bargmann-Fock weight $\vp$ is 
\[
D^+_{\vp} (Z) := \limsup _{r \to \infty} \sup _{z \in \bbC ^n} \sup _{v \in \bbC ^n - 0} \frac{\int _{B_r(z)}dd^c \log |T|^2 (v,\bar v) dV}{\int _{B_r(z)}dd^c\vp (v,\bar v)dV} =  \limsup _{r \to \infty} \sup _{z \in \bbC ^n} \sup _{v \in \bbC ^n - 0} \frac{\Upsilon^Z_r (v,\bar v)}{dd^c \vp_r(z) (v,\bar v)}
\]
\item[(b)] The asymptotic lower density of $Z$ with respect to the Bargmann-Fock weight $\vp$ is 
\[
D^-_{\vp} (Z) := \liminf _{r \to \infty} \inf _{z \in \bbC ^n} \inf _{v \in \bbC ^n - 0} \frac{\int _{B_r(z)} dd^c \log |T|^2 (v,\bar v) dV}{\int _{B_r(z)} dd^c \vp (v,\bar v)dV} =  \liminf _{r \to \infty} \inf _{z \in \bbC ^n} \inf _{v \in \bbC ^n - 0} \frac{\Upsilon^Z_r (v,\bar v)}{dd^c \vp_r(z) (v,\bar v)}
\]
\end{enumerate}
\end{defn}
In other words, the upper density $D^+_{\vp}(Z)$ is the infimum of all positive numbers $a$ such that 
\[
dd^c \vp_r - \frac{1}{a} \Upsilon^Z_r > 0,
\]
while the lower density $D^-_{\vp} (Z)$ is the supremum of all numbers $c$ such that there exists $z, v \in \bbC ^n$ satisfying 
\[
dd^c \vp _r (z)(v,\bar v) - \frac{1}{c} \Upsilon^Z_r(z) (v,\bar v) < 0.
\]
Note that 
\[
D^-_{\vp} (Z) \le D^+_{\vp} (Z)
\]
and that either of the densities can be infinite.

In the present article, the following simple proposition is relevant.

\begin{prop}\label{alg-zero-density}
If $Z$ is an algebraic hypersurface in $\bbC ^n$ then $D^+_{\vp}(Z) = 0$.
\end{prop}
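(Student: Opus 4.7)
The plan is to combine the Bargmann--Fock lower curvature bound on $\vp$ with the classical polynomial area growth of algebraic hypersurfaces, via the identification of $\|\Upsilon^Z_r(z)\|$ with a normalized area ratio.

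First I would observe that the hypothesis $dd^c\vp \ge m\omega_o$ averages to the same bound for the mean weight: $dd^c\vp_r(z) \ge m\omega_o$ pointwise in $z$, since averaging a pointwise inequality of $(1,1)$-forms preserves it. Hence for every nonzero $v\in\bbC^n$,
\[
\frac{\Upsilon^Z_r(z)(v,\bar v)}{dd^c\vp_r(z)(v,\bar v)} \;\le\; \frac{1}{m}\cdot \frac{\Upsilon^Z_r(z)(v,\bar v)}{\omega_o(v,\bar v)}.
\]
Since $\Upsilon^Z_r$ is a non-negative $(1,1)$-form, its largest eigenvalue (with respect to $\omega_o$) is bounded by its trace; that is, $\Upsilon^Z_r(z)(v,\bar v) \le \|\Upsilon^Z_r(z)\|\,\omega_o(v,\bar v)$. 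Thus the estimate of $D^+_\vp(Z)$ reduces to proving that $\|\Upsilon^Z_r(z)\|\to 0$ uniformly in $z$ as $r\to\infty$.

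Next I would recall the identity displayed just before the statement, which rewrites the trace as
\[
\|\Upsilon^Z_r(z)\| \;=\; \frac{1}{(n-1)!}\dashint_{B_r(z)} [Z]\wedge\omega_o^{n-1} \;=\; \frac{C_n}{r^{2n}}\,\area(Z\cap B_r(z)),
\]
for a dimensional constant $C_n$. So the problem becomes one of bounding the $(2n{-}2)$-dimensional Euclidean area of $Z\cap B_r(z)$ for an algebraic hypersurface $Z$ of degree $d$. The key input is the classical fact (via Wirtinger's formula together with a Crofton/Bezout argument, projecting onto a generic complex line) that
\[
\area\bigl(Z\cap B_r(z)\bigr) \;\le\; K_n\, d\, r^{2n-2}
\]
for a constant $K_n$ independent of $z$ and $r$. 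Substituting this into the previous identity yields $\|\Upsilon^Z_r(z)\| \le C_n K_n d/r^2$, uniformly in $z\in\bbC^n$.

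Combining the two steps, the ratio controlling $D^+_\vp(Z)$ is bounded above by $C_n K_n d/(m r^2)$, which tends to $0$ as $r\to\infty$, giving $D^+_\vp(Z)=0$. The only nontrivial step is the degree-times-$r^{2n-2}$ area bound; I would expect to reference a standard source (Stolzenberg, Bishop, or Demailly's monograph) rather than reprove it. A slick alternative path, if one prefers to avoid Crofton directly, is to observe that $T(z)$ is a polynomial of degree $d$, so the Lelong number of the positive current $[Z]$ at infinity (in $\bbP^n$) is controlled by $d$, and the Lelong--Jensen formula immediately gives $\area(Z\cap B_r)/r^{2n-2}$ a finite limit as $r\to\infty$, which is all that is needed.
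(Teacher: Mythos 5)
Your proposal is correct and follows essentially the same route as the paper: bound the ratio by the trace of the non-negative mass tensor (the paper's \eqref{estimate-for-mass-tensor}), invoke the uniform $O(d\,r^{2n-2})$ area growth of an algebraic hypersurface of degree $d$ inside $B_r(z)$, and compare against the $m\,r^{2n}$ lower bound coming from $dd^c\vp\ge m\omega_o$, giving a ratio of order $r^{-2}\to 0$. The only cosmetic difference is the justification of the area bound (you cite Crofton/Bezout or Lelong numbers, the paper cites the $d$-sheeted branched cover description), and both treat this classical fact as a known input.
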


\begin{proof}
Since an algebraic hypersurface $Z$ of degree $d$ is locally a $d$-sheeted cover of a complex hyperplane, the area of $Z\cap B_r(z)$ is
\[
\int _{B_r(z)} [Z]\wedge \frac{\omega _o^{n-1}}{(n-1)!} = O(r^{2n-2})
\]
uniformly in $z$.  Since $dd^c \vp _r (v,\bar v) \ge m r^{2n} \omega _o (v,\bar v)$ the result follows from \eqref{estimate-for-mass-tensor}.
\end{proof}

\subsection{Uniform flatness}

As we already recalled in the introduction, a smooth hypersurface $Z \subset \bbC ^n$ is uniformly flat if there is a positive constant $\ve > 0$ such that 
\[
N_{\ve}(Z) = \left \{ x\in \bbC ^n\ ;\ B_{\ve} (x) \cap Z \neq \emptyset \right \}
\]
(the $\ve$-neighborhood of $Z$) is a tubular neighborhood of $Z$.  In our previous article \cite{pv} we established the following result.

\begin{prop}\label{flat-and-dt}\cite[Lemma 4.11]{pv}
If a smooth hypersurface $Z \subset \bbC ^n$ is uniformly flat for each $r > 0$ the separation function 
\[
S ^Z _r := |dT|^2e^{-\lambda ^T_r} :Z \to \bbR _+
\]
is bounded below by a positive constant $C_r$.
\end{prop}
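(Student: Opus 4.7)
Observe first that the quantity $Q(p):=\lambda^T_r(p)-\log|dT(p)|^2$ is independent of the choice of defining function $T$, because any two defining functions differ on $B_r(p)$ by a nowhere-vanishing holomorphic factor $e^g$, and the mean-value property applied to the pluriharmonic function $2\Re g$ shows that the corresponding changes in $\lambda^T_r(p)$ and in $\log|dT(p)|^2$ cancel. Hence $S^Z_r(p)=e^{-Q(p)}$, and it suffices to bound $Q$ from above by a constant depending only on $r$ and the uniform flatness parameter $\ve$. Moreover, by the same $T$-invariance argument, in any computation of $Q(p)$ one may replace $T$ by an arbitrary holomorphic function on $B_r(p)$ whose zero divisor coincides with $Z\cap B_r(p)$.

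I would first handle the case $r\le\ve/2$, which is essentially immediate. Translate $p$ to the origin and rotate unitarily so that the complex tangent $T_{Z,0}=\{z_n=0\}$. Uniform flatness guarantees that only the sheet through $0$ meets $B_\ve(0)$ and that this sheet is a graph $z_n=h(z')$ with $h(0)=dh(0)=0$ and $\|h\|_{C^2(B_{\ve/2})}\le c_\ve$, where $c_\ve$ depends only on $\ve$ (the second-derivative bound is a standard consequence of uniform flatness, since the latter controls the second fundamental form of $Z$). The local equation $u(z):=z_n-h(z')$ is then a holomorphic defining function on $B_r(0)$ with $|du(0)|=1$, so the invariance of $Q$ yields $Q(0)=\dashint_{B_r(0)}\log|u|^2\,dV$. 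Since $|u(z)|\le|z_n|+c_\ve|z'|^2\le r+c_\ve r^2$ on $B_r(0)$, Jensen's inequality for the concave function $\log$ gives
\begin{equation*}
Q(0)\le\log\dashint_{B_r(0)}|u|^2\,dV\le 2\log(r+c_\ve r^2),
\end{equation*}
a uniform upper bound depending only on $r$ and $\ve$.

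The main obstacle is the case $r>\ve/2$, where $B_r(p)$ can meet several sheets of $Z$ and no single local graph equation is available on the whole ball. The strategy is to construct a holomorphic function $V\in\cO(B_r(p))$ whose zero divisor equals $Z\cap B_r(p)$ and which admits bounds on both $\sup_{B_r(p)}|V|$ and $|dV(p)|$ depending only on $r$ and $\ve$. Uniform flatness supplies two crucial ingredients: (i) the number $N=N(r,\ve)$ of sheets of $Z$ meeting $B_r(p)$ is bounded, by a volume comparison between the pairwise-disjoint tubular neighborhoods of radius $\ve/2$ about the sheets and the enlarged ball $B_{r+\ve}(p)$; (ii) each sheet is a graph of a function with uniformly controlled $C^2$ norm. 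I would then build $V$ as a product of sheet-wise holomorphic functions $V_i\in\cO(B_r(p))$, each obtained by extending the local graph equation of its sheet $\Sigma_i$ to all of $B_r(p)$ via a H\"ormander-type $L^2$ construction with a cutoff supported in a tubular neighborhood of $\Sigma_i$. The $T$-invariance of $Q$ then reduces the problem to $Q(p)=\dashint_{B_r(p)}\log|V|^2\,dV-\log|dV(p)|^2$, which is controlled by the uniform sup-norm bounds on each $V_i$ together with a lower bound on $|dV(p)|$: since $V=V_1\prod_{i\ge 2}V_i$ with $\Sigma_1\ni p$, one has $|dV(p)|=|dV_1(p)|\prod_{i\ge 2}|V_i(p)|$, and each $|V_i(p)|$ for $i\ge 2$ is bounded below using $\dist(p,\Sigma_i)\ge\ve$ together with the pointwise lower bounds on $V_i$ that come out of its construction.

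Combining the two cases yields a positive constant $C_r$, depending only on $r$ and $\ve$, with $S^Z_r(p)\ge C_r$ for every $p\in Z$. The technical heart is the H\"ormander extension of a local graph equation to a global $V_i$ with matching uniform pointwise estimates; as an alternative, should the direct construction become unwieldy, one could invoke a compactness argument on the family of pointed uniformly flat hypersurfaces (precompact in $C^{1,\alpha}_{\rm loc}$ modulo Euclidean isometries, with $Q$ continuous under this convergence) to obtain the bound without an explicit construction.
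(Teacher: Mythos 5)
The paper itself does not prove this proposition; it quotes it from \cite[Lemma 4.11]{pv}, so your argument can only be compared with the proof in that reference (essentially the argument of Lemma 2.4 in \cite{osv}). Your reduction is correct and matches the standard setup: $Q(p):=\lambda^T_r(p)-\log|dT(p)|^2$ is independent of the defining function (the pluriharmonic factor $2\Re g$ contributes $2\Re g(p)$ to both terms by the mean value property, and $d(e^gT)=e^g\,dT$ on $Z$), so one may compute with any local defining function on $B_r(p)$; and your treatment of the case $r\lesssim\ve$ via the graph equation $u=z_n-h(z')$, $|du(0)|=1$, and Jensen is complete and correct.

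The genuine gap is the case $r\gg\ve$, which is where all the content lies. First, the proposed construction of $V_i$ does not work as described: a sheet of $Z$ in $B_r(p)$ is only a graph at scale $\ve$, so there is no single ``local graph equation'' to cut off and extend; and a H\"ormander correction $v$ of $\chi u_i$ solves $\dbar v=\dbar(\chi u_i)$ but will not vanish on $\Sigma_i$, so $\chi u_i-v$ does not have zero divisor $\Sigma_i$. Forcing the correction to vanish on $\Sigma_i$ requires a weight with the right logarithmic singularity along $\Sigma_i$, i.e.\ a potential for $[\Sigma_i]$ with uniformly controlled smooth part --- which is essentially the quantity you are trying to estimate. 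Second, and more seriously, the step ``each $|V_i(p)|$ for $i\ge2$ is bounded below using $\dist(p,\Sigma_i)\ge\ve$ together with the pointwise lower bounds on $V_i$ that come out of its construction'' assumes exactly the hard estimate: a uniform lower bound for a normalized defining function away from its zero set is equivalent to the bound $\sigma^Z_r\ge -C_r$ off $N_{\ve/2}(Z)$, which is the heart of \cite[Lemma 4.11]{pv}. There it is proved not by constructing defining functions but by writing $\sigma^Z_r=\log|T|^2-\lambda^T_r$ as a negative Riesz potential of the area measure of $Z$ and estimating $\int_{B_r(x)}|x-\zeta|^{2-2n}\,d\mu(\zeta)$ using the uniform area bounds ${\rm Area}(Z\cap B_s(x))\le Cs^{2n-2}$ for $s\le\ve$ and $\le Cs^{2n}/\ve^2$ for $s\ge\ve$ that uniform flatness provides; this yields $\sigma^Z_r(x)\ge\log\dist(x,Z)^2-C_r$, and letting $x\to p$ along the normal, where $|T(x)|^2\sim|dT(p)|^2\dist(x,Z)^2$, gives the claim. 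Your compactness alternative could in principle replace this, but as stated it leaves unproved the continuity of $Q$ under $C^{1,\alpha}_{\rm loc}$ convergence of the hypersurfaces (convergence of the currents $[Z_j]$ and of suitably normalized potentials in $L^1_{\rm loc}$, and non-degeneracy of the limit), so it is a plan rather than a proof.
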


In dimension $n=1$ the converse of Proposition \ref{flat-and-dt} is true as well.  And although we suspect it is the case, we don't know if the converse is also true in higher dimensions.

\subsection{Interpolation and sampling}

\begin{defn}
Let $Z$ be a pure $k$-dimensional complex subvariety of $\bbC ^n$, and suppose $\bbC ^n$ is equipped with a Bargmann-Fock weight $\vp \in \sC ^2 (\bbC^n)$.
\begin{enumerate}
\item[(I)] We say that $Z$ is an interpolation subvariety if the restriction map 
\[
\sR _Z : \cO (\bbC ^n) \to \cO (Z)
\]
induces a well-defined and surjective map $\sR _Z : \sB _n (\vp) \to \fB _{k} (Z,\vp)$.
\item[(S)] We say that $Z$ is a sampling subvariety if the restriction map 
\[
\sR _Z : \cO (\bbC ^n) \to \cO (Z)
\]
induces a well-defined and injective map $\sR _Z : \sB _n (\vp) \to \fB _{k} (Z,\vp)$ whose image is closed.
\end{enumerate}
\end{defn}

In connection with interpolation, we have already mentioned Theorem \ref{interp-thm-pv-osv} for hypersurfaces.  For $k < n-1$ very little is known about interpolation.  The most interesting case is $k=0$, which would be most useful in applications. (There are some partial results in \cite{osv}, but these results are not decisive.)  It is known to experts that if $k=0$ and $n > 1$ then it is certainly not density that governs whether or not a sequence of points is interpolating (or for that matter, sampling).  Nevertheless, the density does have to be somewhat constrained.  An interesting necessary condition was introduced in \cite{lindholm}, and very recently improved in \cite{quim-et-al}.

The following simple proposition is very useful.

\begin{prop}[Bounded interpolation operators] \label{surj-has-bdd-inverse}
Let $Z \subset \bbC ^n$ be a complex subvariety and let $\vp \in \sC ^2 (\bbC ^n)$.  If the restriction 
\[
\sR _Z : \sB _n (\vp) \to \fB _{n-1} (Z, \vp)
\]
is surjective then there is a bounded section $I : \fB_{n-1}(Z, \vp) \to \sB _n (\vp)$ of $\sR_Z$.
\end{prop}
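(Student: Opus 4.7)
My plan is to reduce the statement to a direct application of the Bounded Inverse Theorem from Banach space theory. The key preliminary step is to verify that $\sR_Z : \sB_n(\vp) \to \fB_{n-1}(Z,\vp)$ is automatically continuous on Hilbert spaces, since only surjectivity is explicitly assumed. Once continuity is in hand, the existence of a bounded right inverse follows from standard Hilbert space machinery applied to $(\ker \sR_Z)^\perp$.

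First I would establish boundedness of $\sR_Z$ via the Closed Graph Theorem. Suppose $F_j \to F$ in $\sB_n(\vp)$ and $\sR_Z F_j \to g$ in $\fB_{n-1}(Z,\vp)$. For any fixed $p \in Z_{\rm reg}$, the sub-mean-value inequality for $|F|^2$ together with local boundedness of the continuous weight $\vp$ provides a constant $C_p$ such that $|F(p)|^2 \le C_p \|F\|_{\bbC^n}^2$ for every $F \in \sB_n(\vp)$; an analogous estimate holds on $Z_{\rm reg}$ by working in a local holomorphic chart in which the restriction of $\vp$ is locally bounded. Thus $F_j(p) \to F(p)$ and $(\sR_Z F_j)(p) \to g(p)$, so $g(p) = F(p)$ for every $p \in Z_{\rm reg}$, forcing $g = \sR_Z F$. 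The Closed Graph Theorem then yields continuity of $\sR_Z$.

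Next, let $K := \ker \sR_Z$, a closed subspace of the Hilbert space $\sB_n(\vp)$, and let $K^\perp$ denote its orthogonal complement. The restriction $\sR_Z|_{K^\perp} : K^\perp \to \fB_{n-1}(Z,\vp)$ is a bounded linear bijection between Hilbert spaces, with surjectivity coming from the hypothesis and injectivity by the definition of $K$. The Bounded Inverse Theorem then asserts that the set-theoretic inverse $I := (\sR_Z|_{K^\perp})^{-1}$ is continuous, and composing with the inclusion $K^\perp \hookrightarrow \sB_n(\vp)$ gives a bounded linear operator $I : \fB_{n-1}(Z,\vp) \to \sB_n(\vp)$. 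By construction $\sR_Z \circ I = \mathrm{id}_{\fB_{n-1}(Z,\vp)}$, so $I$ is the desired bounded section. The only step requiring any real care is the Closed Graph argument, which rests entirely on continuity of point evaluation in the weighted Bergman spaces; after that, the conclusion is formal.
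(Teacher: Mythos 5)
Your argument is correct, and it produces the same operator as the paper by a genuinely different route. The paper defines $I(f)$ directly as the minimal-norm extension of $f$ --- equivalently, the unique extension orthogonal to $\fJ_Z(\vp)=\ker\sR_Z$ --- and applies the Closed Graph Theorem to $I$ itself: if $f_j\to f$ and $I(f_j)\to F$, then $F\perp\fJ_Z(\vp)$ by continuity of the inner product, and $F|_Z=f$ because norm convergence controls locally uniform convergence (the weighted Bergman inequality plus Montel), so $F=I(f)$. You instead prove boundedness of $\sR_Z$ first (closed graph via continuity of point evaluations on both spaces, which is the same Bergman-inequality input) and then invoke the Bounded Inverse Theorem for the bijection $\sR_Z|_{(\ker\sR_Z)^{\perp}}$; the inverse of that bijection is exactly the minimal-norm extension, so the two sections coincide. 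Your route has the bonus of establishing that $\sR_Z$ itself is bounded, a fact the paper states in Theorems \ref{main-C2} and \ref{main-S} and proves separately where needed (e.g.\ Proposition \ref{bdd-rest-S}). On the other hand it consumes slightly more than the paper's proof: to run the closed graph argument for $\sR_Z$, and to speak of $\sR_Z|_{K^{\perp}}$ as a map onto $\fB_{n-1}(Z,\vp)$, you need $\sR_Z$ to be well-defined on all of $\sB_n(\vp)$, i.e.\ every $F\in\sB_n(\vp)$ must restrict to an element of finite norm on $Z$. The paper's argument uses only that every $f\in\fB_{n-1}(Z,\vp)$ admits \emph{some} extension in $\sB_n(\vp)$, never that restriction maps $\sB_n(\vp)$ into $\fB_{n-1}(Z,\vp)$. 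Since the paper's notion of interpolating subvariety builds well-definedness into the hypothesis, this is a difference in economy rather than a gap; both arguments also tacitly use completeness of $\fB_{n-1}(Z,\vp)$, which is standard.
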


\begin{proof}
We define 
\[
I : \fB _{n-1} (\vp, Z) \to \sB _n (\vp)
\]
by letting $I(f)$ be the extension of $f$ having minimal norm in $\sB_n(\vp)$.  Equivalently, if we let $\sI _Z$ denote the sheaf of germs of holomorphic functions vanishing on $Z$, and write 
\[
\fJ_Z (\vp) := H^0(\bbC ^n, \sI _Z) \cap \sB _n (\vp),
\]
then $I(f)$ is the unique extension of $f$ to $\sB _n (\vp)$ such that 
\[
\int _{\bbC} I(f) \overline{G} e^{-\vp} dV = 0 \quad \text{for all }G \in \fJ_Z (\vp).
\]

By the Closed Graph Theorem the section $I : \fB _{n-1} (Z, \vp) \to \sB _n (\vp)$ is bounded if it has closed graph.  To show the latter, let $f _j \to f$ in $\fB_{n-1} (Z, \vp)$ and let $I(f_j) \to F$ in $\sB _n (\vp)$.  Then for each $G\in \fJ _Z$ one has 
\[
\int _{\bbC ^n} F \overline{G} e^{-\vp} dV = \lim _{j \to \infty} \int _{\bbC ^n} I(f_j) \overline{G} e^{-\vp} dV = 0.
\]
By the weighted Bergman inequality (proposition \ref{w-Bergman-ineq}) the $L^2$ norm controls the $L^{\infty}_{\ell oc}$ norm for holomorphic functions, and hence by Montel's Theorem the two limits are, perhaps after passing to subsequences, locally uniform.  It follows immediately that $F$ is an extension of $f$.  Hence $F = I(f)$, and the proof is complete.
\end{proof}

We end this subsection by noting that a subvariety is a sampling set if and only if 
\[
\int _{\bbC ^n} |F|^2 e^{-\vp} \omega _o ^n \lesssim \int _{Z_{\rm reg}} |F|^2 e^{-\vp} \omega _o ^k \lesssim \int _{\bbC ^n} |F|^2 e^{-\vp} \omega _o ^n
\]
holds for all $F \in \sB_n (\vp)$.  In the case of a smooth hypersurface, \cite{osv} establishes a companion result to Theorem \ref{interp-thm-pv-osv} the generalizes the positive direction of the sampling theorems established in generalized Bargmann-Fock spaces over $\bbC$ established by Berndtsson, Ortega Cerd\`a and Seip.  Surely there is also an analogue for singular, uniformly flat hypersurfaces, but the details have not been worked out.

\subsection{$\mathbf{L^2}$ extension after Ohsawa and Takegoshi}\label{OT-subsection}

Among the most sophisticated and useful set of results in complex analysis and geometry is the collection of theorems on $L^2$ extension that have come to be known as extension theorems of Ohsawa-Takegoshi type.  The name derives from the first fundamental result regarding $L^2$ extension in several complex variables, which was established by Ohsawa and Takegoshi in their celebrated article \cite{ot}.  Since that time, new proofs and extensions of the original result have been established by many authors, too numerous to state here.  The following version, established by the second author in \cite{dv-tak}, will be a convenient version for our purposes.

\begin{d-thm}\label{ot-tak}
Let $X$ be a Stein manifold with K\"ahler metric $\omega$, and let $Z \subset X$ be a smooth hypersurface.  Assume there exists a section $T \in H^0(X,\cO_X(L_Z))$ and a metric $e^{-\lambda}$ for the line bundle $L_Z \to X$ associated to the smooth divisor $Z$, such that $e^{-\lambda}|_Z$ is still a singular Hermitian metric, and 
\[
\sup _X |T|^2e^{-\lambda} \le 1.
\]
Let $H \to X$ be a holomorphic line bundle with singular Hermitian metric $e^{-\psi}$ such that $e^{-\psi}|_Z$ is still a singular Hermitian metric.  Assume there exists  $s \in (0,1]$ such that 
\begin{equation}\label{curv-ot}
\ii (\di \dbar \psi +{\rm Ricci}(\omega)) \ge(1+ ts) \ii \di \dbar \lambda _Z
\end{equation}
for all $t \in [0,1]$.  Then for any section $f \in H^0(Z,\cO_Z(H))$ satisfying 
\[
\int _Z \frac{|f|^2e^{-\psi}}{|dT|_{\omega}^2e^{-\lambda }}dA_{\omega} <+\infty 
\]
there exists a section $F\in H^0(X,\cO_X(H))$ such that 
\[
F|_Z=f \quad \text{and} \quad \int _X |F|^2e^{-\psi} dV_{\omega} \le \frac{24\pi}{s}\int _Z \frac{|f|^2e^{-\psi}}{|dT|_{\omega}^2e^{-\lambda }}dA_{\omega}.
\]
\end{d-thm}

$L^2$ extension theorems for higher codimension subvarieties also exist.  If the subvariety is cut out by a section of some vector bundle whose rank is equal to the codimension, with the section being  generically transverse to the zero section, then the result is very much analogous to Theorem \ref{ot-tak}.   For general submanifolds or subvarieties the result requires more normalization.  

The reader will notice that Theorem \ref{ot-tak} does not mention uniform flatness, and that density is not explicitly stated here.  However, the result does address both issues in a slightly more hidden way.  The issue of density is captured by the curvature conditions, while uniform flatness, or rather the absence of requiring uniform flatness, is dealt with by introducing the denominator $|dT|^2_{\omega}e^{-\lambda}$ in the norm on the hypersurface.  The following proof of Theorem \ref{interp-thm-pv-osv} provides a nice illustration.

\begin{proof}[Proof of Theorem \ref{interp-thm-pv-osv}]
In Theorem \ref{ot-tak} let $X= \bbC ^n$, $\psi= \vp$ and $\omega = \omega _o$.  Fix any $T\in \cO(\bbC ^n)$ whose zero locus is $Z$, such that $dT (z) \neq 0$ for all $z \in Z$.  Set 
\[
\lambda(z)  = \frac{1}{{\rm Vol} B_r(0)}\int _{B_r (0)} \log |T(z- \zeta)|^2 dV(\zeta).
\]
Choose $s \in (0,1]$ such that $D_{\vp} ^+(Z) < \frac{1}{1+s}$.  Then by definition \ref{density-defn} the curvature hypothesis \eqref{curv-ot} is satisfied, and thus we see that for any $f \in \cO (Z)$ such that 
\begin{equation}\label{ot-norm-on-hyp}
\int _Z \frac{|f|^2e^{-\vp}}{S^Z_r}dA_{\omega_o} < +\infty
\end{equation}
there exists $F \in \cO (\bbC ^n)$ such that 
\[
F|_Z = f \quad \text{and} \quad \int _{\bbC ^n} |F|^2 e^{-\vp} dA_{\omega _o} < +\infty.
\]
Since $Z$ is uniformly flat, Proposition \ref{flat-and-dt} implies that every $f \in \fB _{n-1} (Z, \vp)$ satisfies \eqref{ot-norm-on-hyp}, and thus Theorem \ref{interp-thm-pv-osv} is proved.
\end{proof}

\begin{rmk}
Note that something slightly stronger than Theorem \ref{interp-thm-pv-osv} is proved.  In fact, the bounded extension operator guaranteed by Proposition \ref{surj-has-bdd-inverse} is rather uniformly bounded.  Its norm is bounded by a constant that depends only on the density $D^+_{\vp}(Z)$ and on the separation constant 
\[
\sup \{ \ve > 0 \ ;\ U_{\ve} (Z) \text{ is a tubular neighborhood}\},
\]
or equivalently, the lower bound on the separation function.
\red
\end{rmk}

\section{The QuimBo trick}\label{tech-tools}

A basic principle in the study of generalized Bargmann-Fock spaces is that, locally, generalized Bargmann-Fock weights differ from standard Bargmann-Fock weights (i.e., weights that are quadratic polynomials and whose (therefore constant) curvature is strictly positive) by a harmonic function and a bounded term.   The basic result used to establish this decomposition is the following lemma, which is a minor generalization of a technique first introduced by Berndtsson and Ortega-Cerd\'a in dimension $1$ in \cite{quimbo}.  The technique has since affectionately come to be known as the \emph{QuimBo Trick}.

\begin{lem}\label{w12-for-ddbar}
There exists a constant $C> 0$ with the following property.  Let $\omega$ be a continuous closed $(1,1)$-form on a neighborhood of the closed unit polydisk $\overline{\bbD^k}$ in $\bbC ^k$, such that 
\[
- M \omega _o \le \omega \le M \omega _o
\]
for some positive constant $M$.  Then there exists a function $\psi \in \sC^2 (\bbD^k)$ such that 
\[
dd^c \psi = \omega \quad \text{and} \quad \sup _{\bbD^k} (|\psi| + |d\psi|) \le CM.
\]
\end{lem}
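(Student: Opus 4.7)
The plan is to produce $\psi$ on the polydisk in two stages: first a quantitative Poincar\'e reduction followed by a sup-norm solution of a $\dbar$-equation, which yields $dd^c\psi=\omega$ together with the $C^0$ bound on $\psi$; second, interior elliptic estimates to obtain the bound on $|d\psi|$.

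For the first step, I would observe that a neighborhood of $\overline{\bbD^k}$ is star-shaped about the origin, so the standard radial homotopy $\alpha_p(v):=\int_0^1 t\,\omega_{tp}(p,v)\,dt$ of the Poincar\'e lemma produces a real $1$-form $\alpha$ with $d\alpha=\omega$ and $\sup|\alpha|\le C_1 M$, with $C_1$ depending only on $k$. Writing $\alpha=\beta+\bar\beta$ with $\beta:=\alpha^{0,1}$, the fact that $\omega$ has pure bidegree $(1,1)$ forces $\dbar\beta=0$ and $\omega=\di\beta+\dbar\bar\beta$. I would then solve $\dbar u=\beta$ on $\bbD^k$ with $\|u\|_\infty\le C_2\|\beta\|_\infty\le C_2'M$, which on a polydisk is standard via the iterated Cauchy transform (apply the one-variable Cauchy integral in $z_j$ to the $d\bar z_j$-coefficient of $\beta$, stage by stage; the $\dbar$-closedness of $\beta$ guarantees the compatibility between stages and the kernel manifestly preserves the sup-norm bound).

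Setting $\psi:=-\ii(u-\bar u)$, which is manifestly real, one uses $\dbar u=\beta$, $\di\bar u=\overline{\dbar u}=\bar\beta$, and $\di\dbar=-\dbar\di$ to compute
\[
\ii\di\dbar\psi \;=\; \di\dbar u - \di\dbar\bar u \;=\; \di\beta + \dbar\bar\beta \;=\; d\alpha \;=\; \omega,
\]
so $dd^c\psi=\omega$ and $|\psi|\le 2\|u\|_\infty\le CM$.

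For the bound on $|d\psi|$, I would carry out the construction above on a slightly enlarged polydisk $\bbD^k_{1+\delta}$ and then appeal to interior elliptic estimates: since $\Delta\psi$ is a constant multiple of $\mathrm{tr}_{\omega_o}(\omega)$ and hence satisfies $|\Delta\psi|\le CM$, and since $|\psi|\le CM$ on $\bbD^k_{1+\delta}$, standard interior $W^{2,p}$ (or Schauder) estimates for the Laplacian give $\|d\psi\|_{L^\infty(\bbD^k)}\le C'M$. The main obstacle is precisely this gradient control: the $\dbar$-equation with merely continuous data does not directly supply pointwise control of $\di u$, so the $C^1$ estimate on $\psi$ must be extracted indirectly from the $C^0$ estimate together with the bound on $dd^c\psi$ via interior ellipticity on a shrunken polydisk. (If one requires genuine $\sC^2$ regularity of $\psi$, it suffices first to mollify $\omega$; all estimates are uniform in the mollification parameter and pass to the limit.)
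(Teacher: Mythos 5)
Your argument is correct in substance, but it takes a genuinely different route from the paper's. The paper builds $\psi$ as an explicit one-variable logarithmic potential: it convolves a single component of $\omega$ against $\tfrac{1}{\pi}\log|\cdot|^2$ in one coordinate (this is the computation carried out in the proof of Lemma \ref{weight-normalization}, i.e.\ the QuimBo trick proper), and controls $|d\psi|$ by differentiating under the integral and integrating by parts, using $d\omega=0$ to trade derivatives of the coefficients of $\omega$ for derivatives of the kernel. The payoff of that explicit kernel is the anisotropic refinement of Lemma \ref{weight-normalization}: on a polydisk of polyradius $(R,\dots,R,r)$ the bound depends only on the thin radius $r$. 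Your route --- radial Poincar\'e homotopy giving $\alpha$ with $d\alpha=\omega$ and $|\alpha|\lesssim M$, a sup-norm solution of $\dbar u=\alpha^{0,1}$ on the polydisk, $\psi=-\ii(u-\bar u)$, and interior ellipticity for $|d\psi|$ --- is more conceptual and proves Lemma \ref{w12-for-ddbar} as stated, but it is isotropic: every step scales with the largest radius, so it yields $CMR^2$ on the polydisk of radius $R$ (which is all Lemma \ref{w12-for-ddbar} claims) and would not recover the refined estimate. Three small caveats. First, the iterated Cauchy transform preserves sup norms only after shrinking the polydisk at each stage (the leftover coefficient at each stage is a boundary Cauchy integral of a merely continuous function, bounded by $\|\beta\|_\infty$ only away from the boundary circle); since you already work on the enlarged polydisk this costs nothing, but it should be said. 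Second, Schauder estimates need H\"older data; with $\omega$ only continuous use $W^{2,p}$ with $p$ large, or the elementary interior gradient estimate for the Newtonian potential together with the gradient estimate for the bounded harmonic remainder. Third, your mollification remark does not actually produce a $\sC^2$ limit: with continuous $\omega$ one only gets $\psi\in C^{1,\alpha}$ solving $dd^c\psi=\omega$ distributionally. This defect is really in the statement of the lemma itself; in the paper's applications $\omega=dd^c\vp$ with $\vp\in\sC^2$, so $\psi$ differs from $\vp$ by a pluriharmonic function and is automatically $\sC^2$.
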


By scaling, one sees that in the polydisk of radius $(R,...,R)$ one has the same estimate with $M$ replaced by $MR^2$.  However, if the radii of the polydisk are not all the same, one can get a better estimate.

\subsection{Normalization of the weights}

\begin{lem}\label{weight-normalization}
There exists a constant $C> 0$ with the following property.  Let $\vp \in \sC^2(\bbC^k \times \bbC)$ satisfy 
\[
m dd^c |z|^2 \le dd^c \vp \le M dd^c |z|^2
\]
for some positive constants $m$ and $M$.  Then for each $r \in (0, 1]$ and each polydisk $D_R^k(0) \relcomp \bbC ^k$ with polyradius $R=(R_1,...,R_k) \in (0,\infty]^k$ and center $0$ there exists a plurisubharmonic function $\psi=\psi _{R,r} \in \sC^2(D^k_R(0)\times D_r(0))$ and a holomorphic function $g = g_{R,r}\in \cO (D^k_R(0) \times D_r(0))$ satisfying
\[
\vp = m |\cdot |^2 + \psi +2 \re g \quad \text{and} \quad \sup _{D^k_R(0)\times D_r(0)} |\psi|+|d\psi| \le  C\cdot (M-m)\left (r \log \tfrac{e}{r} + r\right ),
\]
where $C$ is a universal constant independent of the weight $\vp$, the radius $r$ and the polyradius $R$.
\end{lem}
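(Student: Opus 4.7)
The plan is as follows. Set $\tilde\vp(z,w) := \vp(z,w) - m|z|^2 - m|w|^2$, so that the hypothesis on $\vp$ gives the two-sided estimate
\[
0 \le \omega := dd^c\tilde\vp \le (M-m)\, dd^c|\cdot|^2
\]
on $\bbC^{k+1}$. If I can construct a psh function $\psi \in \sC^2(D_R^k(0) \times D_r(0))$ with $dd^c\psi = \omega$ and
\[
\sup_{D_R^k(0)\times D_r(0)}(|\psi|+|d\psi|) \le C(M-m)\bigl(r\log\tfrac{e}{r}+r\bigr),
\]
then the difference $\tilde\vp - \psi$ is pluriharmonic on the contractible polydisk $D_R^k(0)\times D_r(0)$, hence equal to $2\re g$ for some $g \in \cO(D_R^k(0)\times D_r(0))$. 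The decomposition $\vp = m|\cdot|^2 + \psi + 2\re g$ then follows immediately.

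Producing $\psi$ with the stated bound is the heart of the proof. A direct rescaling of Lemma \ref{w12-for-ddbar} from the unit polydisk to $D_R^k(0) \times D_r(0)$ introduces an undesirable factor of $\max_j(R_j^2, r^2)$, useless when the $R_j$ are large (or infinite). The resolution exploits the anisotropy of the polydisk: data in the long directions contribute pluriharmonically and can be absorbed into $2\re g$, so only the short direction of radius $r$ should enter the bound on $\psi$. The natural candidate is the Green-kernel convolution in the thin direction:
\[
\psi_0(z,w) := \frac{1}{2\pi}\int_{D_r(0)} \log\frac{|w-\eta|^2}{r^2}\, \partial_w\partial_{\bar w}\tilde\vp(z,\eta)\, dA(\eta).
\]
This satisfies $\partial_w\partial_{\bar w}\psi_0 = \partial_w\partial_{\bar w}\tilde\vp$ on $D_R^k(0)\times D_r(0)$ and admits the estimates $|\psi_0|\lesssim (M-m)\,r^2\log(e/r)$ and $|d_w\psi_0|\lesssim (M-m)\,r$ via the standard $L^1$ bounds on the logarithmic and Cauchy kernels on $D_r$.

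The main obstacle is that $\psi_0$ captures only the $(w,\bar w)$-component of $\omega$; matching the $(z_i,\bar z_j)$- and $(z_i,\bar w)$-components without blowing up the $\sC^1$-norm requires further work. The positivity of $\omega$ together with the upper bound $\omega \le (M-m)\,dd^c|\cdot|^2$ gives, via Cauchy--Schwarz applied to the Hermitian matrix of components of $\omega$, the pointwise bounds $|\omega_{j\bar k}| \le M-m$ on all mixed entries; this allows correction terms of size $O((M-m)\,r)$ in $\sC^1$-norm to be built to handle the missing components. An alternative, perhaps more systematic, implementation is to cover $D_R^k(0)\times D_r(0)$ by polydisks $P_\alpha$ of side $\sim r$ in every direction, apply Lemma \ref{w12-for-ddbar} locally to get $\psi_\alpha$ with $\|\psi_\alpha\|_{\sC^1}\lesssim (M-m)\,r^2$, and patch using the vanishing of $H^1(\cO)$ on the Stein polydisk together with a quantitative Čech--to--Dolbeault estimate; the logarithmic factor in the final bound then arises naturally from the Green kernel on $D_r$ implicit in the patching.
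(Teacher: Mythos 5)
Your setup, your candidate potential, and your estimates for it all coincide with the paper's: one subtracts $m|\cdot|^2$, sets $\omega = dd^c(\vp - m|\cdot|^2)$ with $0 \le \omega \le (M-m)\omega_o$, and convolves the $(k+1,\overline{k+1})$-component of $\omega$ against the logarithmic kernel in the thin variable only (the paper inserts a cutoff $\chi(\zeta/r)$ supported in $D_{2r}$). The $L^1$ bounds on the log and Cauchy kernels over a disk of radius $\sim r$ then give exactly the advertised $C(M-m)(r\log\tfrac{e}{r}+r)$. So far you are on the paper's track.

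The gap is that you stop at the step that constitutes the actual content of the proof. You correctly observe that the one-variable convolution manifestly reproduces only $\omega_{k+1\overline{k+1}}$, but then you merely assert that ``correction terms of size $O((M-m)r)$ can be built'' for the $(z_i,\bar z_j)$- and $(z_i,\bar w)$-components, or alternatively that a \v{C}ech--Dolbeault patching over an $r$-fine cover would work; neither construction is carried out, and it is not clear how either would avoid reintroducing the large radii $R_j$ into the estimate. The idea you are missing is the closedness of $\omega$: since $\omega = dd^c(\vp - m|\cdot|^2)$ is $d$-closed, one has $\tfrac{\di}{\di z^j}\omega_{k+1\overline{k+1}} = \tfrac{\di}{\di \zeta}\omega_{j\overline{k+1}}$, and this identity does double duty. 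First, it is what underlies the claim that the single-variable potential already satisfies $dd^c\psi = \omega$ on $D_R^k(0)\times D_r(0)$ --- no correctors are needed, and the remainder $\vp - m|\cdot|^2 - \psi$ is genuinely pluriharmonic. Second, after an integration by parts in $\zeta$ it converts the $z^j$-derivative of $\psi$ into integrals of $\omega_{j\overline{k+1}}$ against the Cauchy kernel and against $\tfrac{1}{r}\chi'(\zeta/r)\log|z^{k+1}-\zeta|^2$, both of which are controlled by the pointwise bound $|\omega_{j\overline{k+1}}| \le M-m$ (your Cauchy--Schwarz observation) times the same $r$-dependent kernel norms; this is how the $\sC^1$ bound on $\psi$ in the long directions is obtained without any factor of $R_j$. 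Without invoking $d\omega = 0$ in this way, your proposal does not close.
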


\begin{proof}
Let $\omega=dd^c (\vp-m |\cdot |^2)$. Then $0\leq \omega \leq (M-m) \omega _o$. Suppose $\chi : \mathbb{R} \rightarrow \mathbb{R}_{\geq 0}$ is a smooth function equal to $1$ on $[-1,1]$ and $0$ outside $(-2,2)$. Now define $\psi : D_R^k(0)\times D_r (0) \rightarrow \mathbb{R} $ by the formula 
\[
\psi (z^1,...,z^{k+1}) = \frac{1}{\pi} \int _{|\zeta| < 2r}  \chi\left (\frac{\zeta}{r} \right )\omega _{k+1\overline{k+1}}(z^1, z^2,...,z^k, \zeta) \log |z^{k+1}-\zeta |^2 dA(\zeta),
\]
It is well-known that $dd^c \psi = \omega$ on $D_R^k (0)\times D_r(0)$.  It follows that the function $\vp - m |\cdot |^2 - \psi$ is pluriharmonic on the simply connected set $D_R(0)^k\times D_r(0)$, and thus equals $2 \re g$ for some $g \in \cO (D_R(0)^k\times D_r(0))$.  Hence in particular, $\psi \in \sC ^2(D_R^k(0)\times D_r(0)).$

The bound on $|\psi|$ follows from an obvious estimate.  As for the bound on $|d\psi|$, notice that 
\[
\frac{\di \psi }{\di z^{k+1}}(z,z^{k+1}) = \frac{1}{\pi} \int _{|\zeta| < 2r}   \chi\left (\frac{\zeta}{r} \right ) \omega _{k+1\overline{k+1}}(z^1, z^2,...,z^k, \zeta) \frac{dA(\zeta)}{z^{k+1}-\zeta },
\]
which is estimated using polar coordinates in $\zeta$ centered at $z^{k+1} \in D_r(0)$.  As for the other partial derivatives, for $1 \le j \le k$ we have 
\begin{eqnarray*}
\frac{\di \psi }{\di z^{j}}(z,z^{j}) &=& \frac{1}{\pi} \int _{|\zeta| < 2r}   \chi\left (\frac{\zeta}{r} \right ) \frac{\di}{\di z ^j} \omega _{k+1\overline{k+1}}(z^1, z^2,...,z^k, \zeta) \log |z^{k+1}-\zeta |^2 dA(\zeta)\\
&=& \frac{1}{\pi} \int _{|\zeta| < 2r}   \chi\left (\frac{\zeta}{r} \right ) \frac{\di}{\di \zeta} \omega _{j\overline{k+1}}(z^1, z^2,...,z^k, \zeta)\log |z^{k+1}-\zeta |^2 dA(\zeta)\\
&=& - \frac{1}{\pi} \int _{|\zeta| < 2r}   \chi\left (\frac{\zeta}{r} \right ) \omega _{j\overline{k+1}}(z^1, z^2,...,z^k, \zeta) \frac{dA(\zeta)}{z^{k+1}-\zeta } \nonumber \\
 &-&   \frac{1}{\pi} \int _{|\zeta| < 2r}   \frac{1}{r}\chi^{'}\left (\frac{\zeta}{r} \right ) \omega _{j\overline{k+1}}(z^1, z^2,...,z^k, \zeta) \log |z^{k+1}-\zeta |^2 dA(\zeta),
\end{eqnarray*}
where the second equality follows because $\di \omega = 0$ and the third equality is obtained via integration by parts. Since $\omega \leq (M-m) \omega_0$, $|\omega _{j \overline{k+1}}| \le M-m$, and the proof is complete. 
\end{proof}
of Lemma \ref{w12-for-ddbar}.

\begin{prop}\label{w-Bergman-ineq}
Let $\vp \in \sC ^2 (\bbC ^n)$ be a smooth weight function such that 
\begin{equation}\label{bounded-BF-curvature-condn}
- M \omega _o \le dd^c\vp \le M \omega _o
\end{equation}
for some positive constant $M$.  Then for each $r > 0$ there exists a constant $C_r$ depending on $r$ and $M$ such that if $F \in \sB_n (\vp)$ then for any $z \in \bbC ^n$
\begin{equation}\label{c0-bi}
(|F|^2 e^{-\vp})(z) \le C_r \int _{B^n_r(z)} |F|^2 e^{-\vp} dV
\end{equation}
and
\begin{equation}\label{c1-bi}
\left | d (|F|^2 e^{-\vp})\right |(z) \le C_r \int _{B^n_r(z)} |F|^2 e^{-\vp} dV
\end{equation}
\end{prop}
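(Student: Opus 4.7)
The strategy is to apply the QuimBo trick of Lemma \ref{w12-for-ddbar} to locally remove the weight and reduce the estimates to the standard sub-mean-value property for holomorphic functions. Fix $z \in \bbC^n$. By Lemma \ref{w12-for-ddbar}, applied (after the obvious rescaling from the unit polydisk to one of radius $r$ containing $B_r^n(z)$) to the closed $(1,1)$-form $dd^c\vp$, there exists a $\sC^2$ function $\psi$ on a neighborhood of $\overline{B_r^n(z)}$ with $dd^c\psi = dd^c\vp$ and with $|\psi|$ and $|d\psi|$ bounded by a constant depending only on $r$ and $M$. Since $B_r^n(z)$ is simply connected and $\vp - \psi$ is pluriharmonic, we may write $\vp = \psi + 2\re g$ for some $g \in \cO(B_r^n(z))$. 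Setting $u := Fe^{-g}$, we obtain the crucial identity
\[
|F|^2 e^{-\vp} = |u|^2 e^{-\psi}
\]
in which $u$ is holomorphic on $B_r^n(z)$ and $e^{-\psi}$ is bounded above and below by constants depending only on $r$ and $M$.

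With this reduction in hand, the $C^0$ estimate \eqref{c0-bi} follows from the sub-mean-value property applied to the subharmonic function $|u|^2$ on $B_{r/2}^n(z)$: after estimating $|u(z)|^2$ by the integral average of $|u|^2$, one uses the two-sided bound on $e^{-\psi}$ to convert $|u|^2$ to $|F|^2 e^{-\vp}$ on both sides of the inequality, which enlarges the constant by a factor depending only on $r$ and $M$. For the $C^1$ estimate \eqref{c1-bi}, expand by Leibniz,
\[
d(|u|^2 e^{-\psi}) = e^{-\psi} d(|u|^2) - |u|^2 e^{-\psi} d\psi.
\]
The second term is dominated pointwise by $|F|^2 e^{-\vp}$ times $|d\psi|$, hence is controlled by the $C^0$ estimate just proved together with the bound on $|d\psi|$. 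For the first term, apply Cauchy estimates to the holomorphic function $u$ on a smaller ball to bound $|du(z)|$ by a constant times $\sup_{B_{r/4}^n(z)} |u|$, and then reapply the $C^0$ argument at nearby centers to convert this supremum to a constant multiple of the integral of $|F|^2 e^{-\vp}$ over $B_r^n(z)$.

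The only technical care required is the bookkeeping of the nested shrinkings (say $r/4 \subset r/2 \subset r$) and of the scaling of Lemma \ref{w12-for-ddbar} from the unit polydisk to polydisks of arbitrary radius, so that the final constants depend only on $r$ and $M$, as claimed. I do not anticipate any conceptual obstacle once the identity $|F|^2 e^{-\vp} = |u|^2 e^{-\psi}$ with $u$ holomorphic and $\psi$ of bounded $\sC^1$-norm is in hand, since from this point the problem reduces to routine interior estimates for holomorphic functions.
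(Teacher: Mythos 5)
Your proposal is correct and follows essentially the same route as the paper: both apply the QuimBo trick (Lemma \ref{w12-for-ddbar}) to write $\vp$ locally as $\psi + 2\re g$ with $\psi$ of bounded $\sC^1$-norm and $g$ holomorphic, obtain the identity $|F|^2e^{-\vp} = |Fe^{-g}|^2e^{-\psi}$, and reduce to the unweighted sub-mean-value and Cauchy estimates for the holomorphic function $Fe^{-g}$. The only difference is that you spell out the unweighted interior estimates that the paper leaves as an exercise.
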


\begin{proof}
By rescaling and translating, we may assume that $r=1$ and $z=0$.  By Lemma \ref{w12-for-ddbar} applied to the form $\omega = dd^c (\vp - \vp (0)) = dd^c \vp$ there exists a function $\psi$ such that 
\[
dd^c \psi = dd^c \vp \quad \text{and} \quad \sup _{\bbB} |\psi| + |d \psi| \le C_o
\]
for some positive constant $C_o$.  It follows from the equation that $\psi -\psi (0) - \vp + \vp (0) = 2 \re G$ for some holomorphic function $G$ whose real part vanishes at $0$.  The imaginary part of $G$ can be chosen arbitrarily; for example we can take it to be $\int _0 ^z d^c (\psi - \vp)$, where the integral is over any curve in $\bbB$ originating at $0$ and terminating at $z$.  This choice yields the property $G(0) = 0$.  Thus we have 
\[
\sup _{\bbB} |\vp - \vp (0) + 2 \re G| + | d (\vp + 2\re G)| \le |\psi (0)| + \sup _{\bbB} |\psi| + |d \psi| \le 2 C_o.
\]
We therefore have 
\[
|F|^2e^{-\vp} = |Fe^G|^2 e^{-\vp (0)} e^{-\vp +\vp (0) - 2 \re G}
\]
Since the last factor is bounded, it can be eliminated from consideration, and we are reduced to the unweighted case (for the holomorphic function $Fe^G$).  The unweighted case is an elementary exercise in complex analysis (with a number of solutions), and is left to the reader.
\end{proof}

\begin{rmk}\label{proof-of-wbi-rmk}
Note that the proof of Proposition \ref{w-Bergman-ineq} yields a slightly more general fact:  if $\Omega \subset \bbC ^n$ is an open set and $F \in \cO (\Omega)$ satisfies 
\[
\int _{\Omega} |F|^2 e^{-\vp} dV < +\infty
\]
where $\vp \in \sC ^2 (\Omega)$ satisfies $-M \omega _o \le dd^c \vp \le M \omega _o$ \emph{only in $\Omega$}, then \eqref{c0-bi} and \eqref{c1-bi} hold for any $z \in \Omega$ and $r \in (0,\infty)$ such that $B_r (z) \subset \Omega$.
\red
\end{rmk}

\subsection{Interpolation sequences in $\mathbf{\bbC}$ are uniformly separated}\label{interp=>sep-1d}

In Section \ref{review-of-interp} (more precisely, in the first paragraph on Page \pageref{seip-work}) we noted that interpolation sequence in Bargmann-Fock spaces are uniformly separated. Let us recall the proof from \cite{quimseep}.

Let $\vp$ be a Bargmann-Fock weight on $\bbC$ and let $\Gamma \subset \bbC$ be a closed discrete subset such that  
\[
\sR_{\Gamma}: \sB_1(\vp) \to \fB_0(\Gamma) := \left \{ f : \Gamma \to \bbC \ ;\ \sum _{\gamma \in \Gamma} |f(\gamma)|^2 e^{-\vp(\gamma)} < +\infty \right \}
\]
is surjective.  Now choose $\gamma_o, \gamma_1 \in \Gamma$ distinct.  The function $f : \Gamma \ni \gamma \mapsto e^{\vp (\gamma_o)/2}\delta _{\gamma _o, \gamma}$ has norm 
\[
||f||^2 = |f(\gamma _o)|^2 e^{-\vp(\gamma _o)} = 1.
\]
By Proposition \ref{surj-has-bdd-inverse} there exists $F \in \sB _1(\vp)$ such that $||F||^2 \le C$ where $C$ is independent of $f$ (hence of $\gamma _o$).  It follows that 
\begin{eqnarray*}
\frac{1}{|\gamma _o - \gamma _1|} &=& \left | \frac{|f(\gamma _o)|^2 e^{-\vp(\gamma _o)} - |f(\gamma _1)|^2 e^{-\vp(\gamma _1)}}{|\gamma _o - \gamma _1|} \right |\\
&=& \left | \frac{1}{\gamma _1 - \gamma _o}\int _0 ^1 \frac{d}{dt} |f(\gamma _o +t (\gamma _1 - \gamma _o))|^2 e^{-\vp(\gamma _o+t (\gamma _1 - \gamma _o))}  dt\right |\\
& \le & \sup _{\bbC} \left | d (|f|^2 e^{-\vp}) \right |.
\end{eqnarray*}
By \eqref{c1-bi} of Proposition \ref{w-Bergman-ineq}, $|\gamma _o - \gamma _1| \gtrsim ||F||^{-2} \ge C^{-1}$, which is what we wanted to show.




\section{Proof of Theorem \ref{main-C1}}

We begin by considering the case of the standard Bargmann-Fock space, and then extend the proof to the general case.  Even in the standard case we were not able to write down a simple, explicit  example of a function in $\fB_1 (C_1)$ that has no extension in $\sB _2$.  We require $L^2$ methods to construct our function.

\subsection{The standard Bargmann-Fock space}\label{bf-case-C1}

\subsubsection{\bf Reduction}  

The strategy of our proof consists in seeking a function $f \in \fB_1 (C_1)$ for which any holomorphic extension would violate \eqref{c1-bi} of Proposition \ref{w-Bergman-ineq}.  (See Paragraph \ref{interp=>sep-1d} for the case of sequences in $\bbC$.)  With this general goal in mind, let $T_1^{\pm} , T_1 \in \cO (\bbC ^2)$ be defined by  
\[
T^{\pm}_1 (x,y) = xy \mp 1, \quad \quad T_1(x,y) = T_1 ^-(x,y) T_1 ^+(x,y) = x^2y^2 -1,
\]
which cut out the curves $C_{1+}$, $C_{1-}$ and $C_1 = C_{1+} \cup C_{1-}$: 
\[
C_{1\pm} := \{T_{1\pm} = 0 \} \quad \text{and} \quad C_1 = \{T_1 = 0\}.
\]
Then $C_{1+} \cap C_{1-} = \text{\O}$.

We shall construct a function $g_{\delta} \in \cO (C_{1+})$ such that 
\begin{equation}\label{bounded-on-C+}
|g_{\delta}(\delta ^{-1}, \delta )|^{-(\delta ^{-2} + \delta ^2)} \sim 1 \quad \text{and} \quad \int _{C_{1+}}|g_{\delta}|^2 e^{-|\cdot |^2}\omega _o \le C/\sqrt{\delta}
\end{equation}
for some constant $C> 0$ independent of $\delta$.   Assuming for the moment that such a function has been found, if we define the function $f_{\delta} \in \cO (C_1)$ by 
\[
f_{\delta}(z) = \left \{ 
\begin{array}{l@{,\quad}l}
g_{\delta}(z) & z \in C_{1+} \\
0 & z \in C_{1-} 
\end{array}
\right . ,
\]
then 
\[
|f_{\delta}(\tfrac{1}{\delta}, \delta )|^{-(\tfrac{1}{\delta^2} + \delta ^2)} \sim 1, \quad |f_{\delta}(\tfrac{1}{\delta}, \delta )|^{-(\tfrac{1}{\delta^2} + \delta ^2)} = 0 \quad \text{and} \quad  \int _{C_1}|f_{\delta}|^2e^{-|\cdot|^2}\omega _o \le C/\sqrt{\delta},
\]
and in particular  $f_{\delta} \in \fB _1 (C_1)$.  To prove Theorem \ref{main-C1} by contradiction, suppse there exists $F \in \sB_2$ extending $f_{\delta}$.  Since the square norm of $f_{\delta}$ is bounded by $C/\sqrt{\delta}$,  Proposition \ref{surj-has-bdd-inverse} says one can find $F_{\delta} \in \sB _2$ such that $||F_{\delta}||^2 \le \tilde C/\sqrt{\delta}$ for some constant $\tilde C$ independent of $\delta$.   But then by \eqref{c1-bi} of Proposition \ref{w-Bergman-ineq}
\begin{eqnarray*}
\frac{1}{2\delta} &=& \frac{|F(\delta , \delta ^{-1})|^2 e^{- (\delta^2 + \delta ^{-2})} - F(-\delta , \delta ^{-1})|^2 e^{- (\delta^2 + \delta ^{-2})}}{2\delta} \\
&=& \frac{1}{2\delta} \int _{-1} ^1 \frac{d}{ds}\left (  |F_{\delta}(\delta^{-1}, s\delta) |^2e^{-\vp(\delta ^{-1} ,s\delta)} \right ) ds \\
&\le &  \sup _{\bbC ^2} \left | d(|F_{\delta}|^2e^{-\vp})\right | \le \hat C /\sqrt{\delta},
\end{eqnarray*}
where the constant $\hat C$ is independent of $\delta$.  The desired contradiction is thus obtained by taking $\delta$ sufficiently small.

\subsubsection{\bf Conclusion of the proof in the standard Bargmann-Fock space}

It remains only to produce the $g=g_{\delta}$ on $C_{1+}$ satisfying \eqref{bounded-on-C+}.  We shall define a function close to $g_{\delta}$ on a large but finite open subset of $C_{1+}$, and then approximately extend the example to all of $C_{1+}$ using H\"ormander's Theorem, thus obtaining $g_{\delta}$.

We work on $\bbC^*$, after using the parametrization 
\begin{equation}\label{nu-defn}
\nu : \bbC ^* \ni t \mapsto (t , t^{-1}) \in C_{1+}
\end{equation}
of $C_{1+}$.  Our $L^2$ norm is then 
\[
\int _{C_{1+}} |g|^2 e^{-|\cdot |^2} \omega _o = \int _{\bbC ^*} |f(t)|^2e^{-|t|^2 - |t|^{-2}} (1+|t|^{-4}) dA(t),
\]
where $f=\nu ^* g$.  Note that for the weight $\vp _o (t) := |t|^2 + |t|^{-2} - \log (1+|t|^{-4})$,  
\[
\frac{\di ^2 \vp _o }{\di t \di \bar t} =  1 + |t|^{-4} - \frac{4 |t|^2}{(1+|t|^4)^2}  = \frac{|t|^{12} +3|t|^4(|t|^2-1)^2 + 2|t|^6 +1 }{|t|^4(1+|t|^4)^2}, 
\]
which is positive, $\to 1$ as $|t| \to \infty$ and $\to \infty$ as $|t| \to 0$.  Thus 
\[
dd^c \vp _o \ge c_o  dd^c |t|^2
\]
for some positive constant $c_o$.  Moreover, there exists a compact subset $K \relcomp \bbC$ (necessarily containing the origin) such that 
\begin{equation}\label{curv-good-outside-cpct}
dd^c \vp _o \le 2 dd^c |t|^2 \quad \text{for }t \in \bbC ^* - K.
\end{equation}

Now fix $\delta \in (0,1)$, keeping in mind that we will let $\delta \to 0$.  To find a function $f$ such that $|f(1/\delta)|^2 e^{-\delta ^2}e^{ - \delta ^{-2}} \sim 1$ for some $\delta << 1$, one need only worry about the factor $e^{-\delta ^{-2}}$, which is extremely small.  A natural choice is 
\[
f_o (t) = e^{t^2/2},
\]
which satisfies $|f_o(1/\delta)|^2 e^{-\delta ^2}e^{ - \delta ^{-2}} = e^{-\delta ^2}$.  Unfortunately the function 
\[
|f_o (t)|^2 e^{-\vp _o (t)} = e^{\re t^2 - |t|^2 - |t|^{-2}} (1+|t|^{-4}) = e^{-2 (\im t)^2}e^{- |t|^{-2}} (1+|t|^{-4}),
\]
while locally integrable near the origin in $\bbC$, is not integrable in a neighborhood of $\{ \infty\}$, where it is asymptotically $e^{-2(\im t)^2}$.  Thus we are going to take the function $\chi f_o$, where $\chi$ is a cut-off function to be described shortly, and then correct this function using H\"ormander's Theorem.

Consider the vertical strip 
\begin{equation}\label{s-defn}
S_{\delta} := \left \{ t \in \bbC \ ;\ |\re t - \tfrac{1}{\delta} | \le \tfrac{3}{4\sqrt{\delta}}\right \} \subset \bbC ^*.
\end{equation}
Take a function $\chi \in \sC ^{\infty} _o (S_{\delta})$ such that 
\[
0 \le \chi \le 1, \quad \chi (t) \equiv 1 \text{ for }|\re t - \tfrac{1}{\delta} | \le \tfrac{1}{2\sqrt{\delta}} \quad \text{and} \quad \sup _{S_{\delta}}|d \chi| \le 5\sqrt{\delta}.
\]
(Such a function can be chosen to depend only on $\re t$, for instance.)  

The function $\chi f_o$ satisfies 
\[
\int _{\bbC ^*} |\chi f_o|^2 e^{-\vp_o} dA \le C\delta ^{-1/2}
\]
for some constant $C$ that does not depend on $\delta$.  Moreover, 
\[
\int _{\bbC ^*} | \dbar \chi f_o|^2 e^{-\vp_o} dA \le \sup |d\chi|^2 \int _{{\rm Supp}(\chi)} |f_o|^2 e^{-\vp_o} dA \le C\sqrt{\delta}.
\]
By H\"ormander's Theorem there exists a function $u$ such that 
\[
\dbar u = \dbar \chi f_o \quad \text{and} \quad \int _{\bbC ^*} |u|^2e^{-\vp_o} dA \le A \sqrt{\delta}.
\]
Note in particular that 
\[
u \in \cO (\{ t \ ;\ |\re t - \delta ^{-1}| < 1/(2\sqrt{\delta})\}.
\]
Moreover, if $\delta$ is small enough then by \eqref{curv-good-outside-cpct} and the proof of Proposition \ref{w-Bergman-ineq} (c.f. Remark \ref{proof-of-wbi-rmk})
\[
|u(\delta ^{-1})|^2 e^{-\delta ^2 - \delta ^{-2}} = |u(\delta ^{-1})|^2 e^{-\vp _o(\delta ^{-1})}(1+\delta ^4)^{-1} \le C\sqrt{\delta}.
\]
(This estimate is established by estimating $|u(\delta ^{-1})|^2e^{-\vp(\delta ^{-1})}$ by its $L^2$ norm over the disk of radius $1$ and center $1/\delta$ using the QuimBo Trick.)  Hence the function 
\[
f := \chi f_o - u
\]
satisfies 
\[
\int _{\bbC^*} |f|^2 e^{-\vp_o} \lesssim \delta ^{-1/2} \quad \text{and} \quad |f(\delta ^{-1})|^2 e^{-\delta ^2 - \delta ^{-2}} \sim (1 + \sqrt{\delta})e^{-\delta ^2} \sim 1 
\]
Letting 
\[
g_{\delta} (t,t^{-1}) := f(t)
\]
provides the function satisfying \eqref{bounded-on-C+}, and hence proves Theorem \ref{main-C1} in the standard case.  
\qed

\subsection{The general case}

The passage to the general case involves using the QuimBo Trick in the form of Lemma \ref{weight-normalization} to reduce to a situation that is very similar to the standard case.  In particular, we will be brief when stating estimates in this setting that are very similar to those of the standard case. 

First we normalize the weight $\vp$ in the bidisk 
\[
D^2 _{\delta} := \left \{ (x,y) \in \bbC ^2\ ;\ |x| < {2}/{\delta}\ ,\ |y| < 1 \right \}
\]
via Lemma \ref{weight-normalization}.  Thus we have functions $\psi _{\delta} \in \sC^2 (D^2_{\delta})$ and $h _{\delta} \in \cO (D^2_{\delta})$ such that 
\[
\vp = m |\cdot |^2 + \psi _{\delta} + 2 \re h_{\delta}  \quad \text{and} \quad ||\psi _{\delta}||_{\sC^1} \le C
\]
for some constant $C$ independent of $\delta$.  In particular, this relation holds on 
\[
\tilde S_{\delta} \subset D^2 _{\delta},
\]
where (compare \eqref{s-defn})
\[
\tilde S _{\delta} := \nu (S_{\delta}) =  \left \{ (t,1/t)\ ;\ |\re t - \tfrac{1}{\delta}| \le \tfrac{3}{4\sqrt{\delta}} \right \}.
\]
Again, pulling back by $\nu$, we work on $\bbC^*$, where the $L^2$ norm is 
\[
\int _{\bbC^*} |f(t)|^2e^{- \vp(t,t^{-1})} (1+|t|^{-4}) dA(t).
\]
This time, however, the weight $\vp _o (t) = \vp (t, t^{-1}) - \log (1+|t|^{-4})$ could fail to be positively curved if $m$ is sufficiently small.  We therefore need to choose a weight for which H\"ormander's Theorem can be applied, and that still provides the right estimates.  With this in mind, we let 
\[
\eta (t) := \vp (t, t^{-1})  - \frac{m}{2} |t|^{-2}.
\]
Then 
\[
dd^c \eta (t)  \ge \frac{m}{2} \nu ^* \omega _o \ge \frac{m}{2} dd^c |t|^2
\]
and 
\[
e^{-\vp(t, t^{-1})} (1+|t|^{-4}) = e^{-\eta(t)} (1+|t|^{-4})e^{-\frac{m}{2} |t|^{-2}} \le C_m e^{-\eta (t)}
\]
for some constant $C_m$.

Now let $\chi \in \sC ^{\infty}_o ([0,3/4))$ have the property that $0 \le \chi \le 1$, $\chi (r) \equiv 1$ for $0 \le r \le \frac{1}{2}$ and $|\chi '| \le 5$.  Define 
\[
\tilde f(t) := \chi (\sqrt{\delta} |\re t - \delta|) \chi (\delta |\im t|) e^{g_{\delta}(t, t^{-1})+ mt^2/2}.
\]
Then by Lemma \ref{weight-normalization}, 
\[
\int _{\bbC ^*} |\tilde f|^2 e^{-\vp_o} dA \lesssim \int _{\tilde S_{\delta}} e^{m(\re t^2- |t|^2) - \frac{m}{2} |t|^{-2}} dA(t) \lesssim \delta ^{-1/2},
\]
where the last estimate is proved as in the standard case.  Also, since 
\[
\left |\dbar \left ( \chi (\sqrt{\delta} |\re t - \delta|) \chi (\delta |\im t|)  \right ) \right |^2 \lesssim \delta \cdot \mathbf{1}_{\tilde S_{\delta}},
\]
we have 
\[
\int _{\bbC ^*} |\dbar \tilde f|^2 e^{-\eta} dA \lesssim \delta \int _{\tilde S_{\delta}} e^{m(\re t^2- |t|^2) - \frac{m}{2} |t|^{-2}} dA(t) \lesssim \delta ^{1/2}.
\]
By H\"ormander's Theorem there exists a smooth function $u$ such that 
\[
\dbar u = \dbar \tilde f \quad \text{and} \quad \int _{\bbC^*} |u|^2e^{-\vp _o} dA \lesssim \int _{\bbC^*} |u|^2e^{-\eta} dA = O(\delta^{1/2}).
\]
Since $u$ is holomorphic in a neighborhood of $\delta^{-1}$, as in the standard case we have (via Remark \ref{proof-of-wbi-rmk})
\[
|u(\delta^{-1})|^2 e^{-\vp(\delta, \delta ^{-1})} \lesssim \delta ^{1/2}.
\]
It follows that the function 
\[
f(t, t^{-1}) := \chi (\sqrt{\delta} |\re t - \delta|) \chi (\delta |\im t|) e^{g_{\delta}(t, t^{-1})+ mt^2/2} - u (t)
\]
is holomorphic and satisfies 
\[
|f(\delta ^{-1}, \delta)|^2 e^{-\vp (\delta ^{-1}, \delta)} \sim 1 \quad \text{and} \quad \int _{C_+} |f|^2e^{-\vp} \omega _o \lesssim \delta ^{-1/2}.
\]
As in the standard case, $f$ has no extension in $\sB _2 (\vp)$ as soon as $\delta$ is small enough.  The proof of Theorem \ref{main-C1} is complete.
\qed

\section{Proof of Theorem \ref{main-S}}\label{S-proof-section}

Throughout this section we make use of the `graph embedding' $\Phi : S \emb \bbC ^3$ defined by \eqref{S-graph-emb}.

\subsection{Asymptotics of the norm from the $L^2$ extension theorem}\label{ot-norm-asymp-subsection}

Consider the $L^2$ Extension Theorem \ref{ot-tak} in the following situation.  We take $X= \bbC ^3, Z= S$, $T (x,y,z) := z- xy^2$, $\vp$ as in the hypotheses of Theorem \ref{main-S}, i.e., satisfying \eqref{bf-weight-hyp}, $s = 1$ and $\lambda$ defined by 
\begin{equation}\label{lambda-S-defn}
\lambda (x,y,z) := \frac{1}{{\rm Vol}(B_R(0))} \int _{B_R(x,y,z)} \log |T(\xi, \eta, \zeta)|^2 dV(\xi,\eta,\zeta).
\end{equation}
Notice that $\lambda$ is plurisubharmonic, i.e., $dd^c \lambda \ge 0$.  As we saw in the proof of Proposition \ref{alg-zero-density}, given any $\ve > 0$, there exists $R$ sufficiently large such that 
\[
dd^c \lambda \le \ve \omega _o.
\]
Hence the curvature hypothesis \eqref{curv-ot} of Theorem \ref{ot-tak} is satisfied if we take $R>> 1$.  We fix $R$ from here on.

It follows that for any $f \in \cO (S)$ satisfying 
\begin{equation}\label{ot-subvar-norm}
\int _S \frac{|f|^2 e^{-\vp}}{|dT|^2_{\omega _o} e^{-\lambda}} \omega _o ^2 < +\infty
\end{equation}
there exists $F \in \sB _3(\vp)$ such that $F|_S = f$.  (There is also an estimate for the $\sB_3 (\vp)$-norm of $F$ in terms of the norm \eqref{ot-subvar-norm} of $f$, but this estimate is not important to us.)

Now, $dT = dz - y^2 dx - 2xy dy$, so 
\[
|dT|^2_{\omega _o} = 1 + 4|xy|^2 +|y^2|^2.
\]

\begin{lem}\label{lambda-lemma}
There exists $C > 0$ such that 
\[
 \frac{1}{C} (1+|x|^2+ |y|^2 + 4|xy|^2 + |y|^4) \le e^{\lambda(x,y,xy^2)} \le C (1+|x|^2+ |y|^2 + 4|xy|^2 + |y|^4).
\]
\end{lem}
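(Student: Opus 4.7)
The strategy is to compute the ball integral defining $\lambda$ directly, after translating coordinates to $(a,b,c) = (\xi-x,\eta-y,\zeta-xy^2)$. Under this translation,
\[
T(x+a,y+b,xy^2+c) \;=\; c - Q(a,b), \quad Q(a,b) := (x+a)(y+b)^2 - xy^2 \;=\; y^2 a + 2xy\,b + x b^2 + 2aby + ab^2,
\]
so that $e^{\lambda(x,y,xy^2)}$ is the geometric mean of $|c-Q(a,b)|^2$ over the ball $|(a,b,c)|<R$. Observe that the five monomial coefficients of $Q$, namely $y^2,\,2xy,\,x,\,2y,\,1$, encode precisely the terms on the right-hand side of the claimed estimate.

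The upper bound is then routine. By concavity of $\log$ (Jensen's inequality),
\[
e^{\lambda(x,y,xy^2)} \;\le\; \frac{1}{|B_R|}\int_{|(a,b,c)|<R} |T|^2\, dV.
\]
Expanding $|T|^2$ via Cauchy--Schwarz into six monomial terms in $a,b,c$ and integrating over the ball (with the fixed $R$ absorbed into the constant) immediately yields the claimed upper bound.

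For the lower bound, I would exploit the fact that $T$ is linear in $c$. For each fixed $(a,b)$ with $|a|^2+|b|^2<R^2$, the inner integral over $c$ in the disk of radius $\rho := \sqrt{R^2-|a|^2-|b|^2}$ is given in closed form by Jensen's formula on disks, namely $\pi\rho^2 \cdot 2\log|Q|$ when $|Q|\ge\rho$ and $\pi\rho^2\bigl(2\log\rho - 1 + |Q|^2/\rho^2\bigr)$ otherwise; in both cases it dominates $\pi\rho^2 (2\log|Q| - 1)$. Restricting the outer integration to $\{|a|^2+|b|^2 < R^2/2\}$, where $\rho^2 \ge R^2/2$, thus reduces the lower bound to the two-variable estimate
\[
\frac{1}{|B_{R/\sqrt{2}}|}\int_{|(a,b)|<R/\sqrt{2}} \log|Q(a,b)|^2\, dV(a,b) \;\ge\; \log\bigl(1+|x|^2+|y|^2+4|xy|^2+|y|^4\bigr) - C.
\]

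This last estimate is the main obstacle. It combines two standard potential-theoretic facts about polynomials of bounded degree. First, a Bernstein--Markov type inequality: for any polynomial $P$ of degree $\le d$ on $\bbC^k$ and any ball $B_r$, the geometric mean of $|P|^2$ dominates $\|P\|_{L^\infty(B_r)}^2$ up to a multiplicative constant depending only on $d,k,r$, a fact one proves by integrating $\log(\|P\|_\infty / |P|)$ against the Remez-type volume bound $\bigl|\{|P|<\varepsilon\|P\|_\infty\}\cap B_r\bigr| \le C_{d,k}\,\varepsilon^{1/d}|B_r|$. Second, the $L^2$-orthogonality of monomials on the round ball gives
\[
\|Q\|_{L^2(B_{R/\sqrt{2}})}^2 \;=\; \sum_\alpha |c_\alpha|^2\,\|z^\alpha\|_{L^2(B_{R/\sqrt{2}})}^2 \;\ge\; c\bigl(|y|^4+4|xy|^2+|x|^2+4|y|^2+1\bigr),
\]
with the $R$-dependence absorbed into $c$, and hence $\|Q\|_{L^\infty}^2 \ge \|Q\|_{L^2}^2/|B_{R/\sqrt{2}}| \ge c'\bigl(1+|x|^2+|y|^2+4|xy|^2+|y|^4\bigr)$. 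Applying Bernstein--Markov to $P=Q$ (degree $3$, two variables) chains these inequalities together and completes the lower bound.
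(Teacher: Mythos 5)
Your proof is correct, but it follows a genuinely different route from the paper's, so a comparison is worth recording. The paper works with the full three-variable integrand $\log|\zeta-\xi\eta^2-(x\eta^2+2y\xi\eta+2xy\eta+y^2\xi)|^2$ directly: the upper bound comes from a triangle inequality inside the logarithm followed by factoring out $\log\|(x,y,xy,y^2)\|^2$, and the lower bound from restricting to an explicit region $\fC_R(x,y)\subset B_R(0)$ of uniformly positive volume on which the ``linear in $(x,y,xy,y^2)$'' part of the integrand is at least half its maximal size, discarding the locus where the integrand is very negative on the grounds that $\log$ is locally integrable. You instead integrate out the fiber variable $c$ exactly via Jensen's formula on disks, which collapses the problem to the geometric mean of the two-variable polynomial $Q(a,b)$ whose coefficients are exactly $y^2,2xy,x,2y,1$; you then quote two clean facts about polynomials of bounded degree on a fixed ball --- the geometric mean dominates the sup norm (Remez/Bernstein--Markov) and, by monomial orthogonality, the $L^2$ norm is the $\ell^2$ norm of the coefficients. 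Your version is more modular and immediately generalizes to any graph $z=P(x,y)$: it identifies $e^{\lambda}$ along the graph with the squared coefficient norm of the translated polynomial, whereas the paper's argument is self-contained but ad hoc. The price is that you import the Remez-type sublevel-set bound as a black box; note that the paper's dismissal of the region where its integrand is small secretly relies on the same kind of uniform-in-$(x,y)$ control of $\int(\log|Q|)_-$, so neither write-up escapes that input entirely.

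Two small bookkeeping points you should tighten. First, when you restrict the outer integration to $\{|a|^2+|b|^2<R^2/2\}$ you silently discard the annulus $R^2/2\le|a|^2+|b|^2<R^2$; its contribution is bounded below because the inner integral there is at least $\pi\rho^2(2\log\rho-1)$ and $t\mapsto t\log t$ is bounded near $0$, but this should be said. Second, on the retained region the weight $\pi\rho^2$ varies in $[\pi R^2/2,\pi R^2]$ while $2\log|Q|-1$ changes sign, so passing from $\int\pi\rho^2(2\log|Q|-1)$ to a fixed multiple of $\int\log|Q|^2$ minus a constant requires a uniform bound on $\int(\log|Q|)_-$ --- which is again exactly the Remez estimate you already invoke, applied once more. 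Neither point affects the validity of the argument.
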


\begin{proof}
We have 
\begin{eqnarray*}
\lambda (x,y,xy^2) &=& \frac{6}{\pi ^3 R^6} \int _{B_R(0)} \log |\zeta +xy^2 - (\xi +x)(\eta +y)^2|^2 dV(\xi, \eta, \zeta)\\
&=& \frac{6}{\pi ^3 R^6} \int _{B_R(0)} \log |\zeta - \xi \eta ^2 -(x\cdot \eta ^2 +y \cdot 2\xi \eta + xy \cdot 2\eta +y^2 \cdot \xi)|^2 dV(\xi, \eta, \zeta)\\
& \le &  \frac{6}{\pi ^3 R^6} \int _{B_R(0)}  \log \left (|\zeta| + |\xi \eta ^2| + |(\eta ^2, 2\xi \eta, 2\eta, \xi)| \cdot |(x,y,xy, y^2)|\right )^2 dV(\xi, \eta, \zeta)\\
& \le &  \frac{6}{\pi ^3 R^6} \int _{B_R(0)} \log \left (2 R^3 + 2R^2 \sqrt{|x|^2 + |y|^2 + 4|xy|^2 + |y^2|^2}\right )^2 dV(\xi, \eta, \zeta)\\
&=& \log \left ( |x|^2 + |y|^2 + 4|xy|^2 + |y^2|^2 \right ) \\
&& +  \frac{6}{\pi ^3 R^6} \int _{B_R(0)} \log \left (\frac{2 R^3}{\sqrt{|x|^2 + |y|^2 + 4|xy|^2 + |y^2|^2}} + 2R^2 \right )^2 dV(\xi,\eta,\zeta).
\end{eqnarray*}
Thus if $(x,y)$ is outside a ball of radius $R$ then the second term is small.  For $(x,y,xy^2)$ such that $(x,y)$ is inside the ball of radius $R$, $\lambda$ is already bounded. 

To obtain the opposite inequality,  one proceeds as follows.  First, note that 
\[
\lambda (x,y,xy^2) \ge \frac{6}{\pi ^3 R^6} \int _{B_R(0)} \log (|(x\cdot \eta ^2 +y \cdot 2\xi \eta + xy \cdot 2\eta +y^2 \cdot \xi)| - |\zeta - \xi \eta ^2|)^2 dV(\xi, \eta, \zeta).
\]
The region of $B_R(0)$ where 
\[
(|(x, y, 2xy, y^2) \cdot (\eta ^2 ,2\xi \eta, \eta,\xi)| - |\zeta - \xi \eta ^2|)^2 < 1
\]
is of no concern to us, since $\log |t|$ is locally integrable near $t =0$.  Thus we may focus on the region $B_+ \subset B_R(0)$ where $ (|(x, y, 2xy, y^2) \cdot (\eta ^2 ,2\xi \eta, \eta,\xi)| - |\zeta - \xi \eta ^2|)^2 \ge 1$ at the cost of subtracting some possibly large but fixed constant (depending only on $R$).  We may also assume that $|x|^2 + |y|^2 \ge R^7$.  The aforementioned region contains the set  
\[
\fC _R (x,y) := \left \{ (\xi, \eta, \zeta) \in B_R(0)\ ;\ |(x, y, 2xy, y^2) \cdot (\eta ^2 ,2\xi \eta, \eta,\xi)| \ge \tfrac{1}{2} ||(x, y, 2xy, y^2)|| \right \},
\]
and $\fC_R (x,y)$ has positive volume in $B_R(0)$ uniformly in $R$.  It follows that 
\begin{eqnarray*}
\lambda (x,y,xy^2) &\ge & M_R \int _{\fC_R (x,y)} \log ( \tfrac{1}{2} ||(x, y, 2xy, y^2)|| - |\zeta - \xi \eta ^2|)^2 dV(\xi, \eta, \zeta)\\
&=& M_R {\rm Vol} (\fC_R (x,y))\log (1+ |x|^2 + |y|^2 + 4|xy|^2 + |y|^4) \\
&& \qquad + M_R \int _{\fC(x,y)} \log \left ( \tfrac{1}{2} - \frac{ 1+ 2 |\zeta - \xi \eta ^2|}{\sqrt{1+ ||(x, y, 2xy, y^2)||^2}}\right )^2 dV(\xi, \eta, \zeta)\\
&\ge& C_o \log  (1+ |x|^2 + |y|^2 + 4|xy|^2 + |y|^4),
\end{eqnarray*}
and the proof is complete.

\end{proof}

By Lemma \ref{lambda-lemma} we have 
\[
\int _S \frac{|f|^2 e^{-\vp}}{|dT|^2_{\omega _o} e^{-\lambda}} \omega _o ^2  \leq \int _S \frac{|f|^2 e^{-\vp}}{(1+ 4|xy|^2 + |y|^4)} (1+ |x|^2 + |y|^2 + 4|xy|^2 + |y|^4) \omega _o ^2
\]
If we now use our parametrization $\Psi _2 :\bbC \ni (x,y) \mapsto (x,y,xy^2) \in S$, we find that 
\[
\Psi _2 ^* \frac{\omega _o ^2}{2} = (1 + 4|xy|^2 + |y|^4) dV(x,y),
\]
and therefore 
\begin{equation}\label{S-equiv-norms}
\int _S \frac{|f|^2 e^{-\vp}}{|dT|^2_{\omega _o} e^{-\lambda}} \omega _o ^2 \leq  2\int _{\bbC ^2} |f(x,y,xy^2)|^2 e^{-\vp (x,y,xy^2)} (1+|x|^2+ |y|^2 + 4|xy|^2 + |y|^4) dV(x,y).
\end{equation}

The norm on the right hand side of \eqref{S-equiv-norms} is larger than the $\fH_2(S,\vp)$-norm, obtained when the weight factor $(1+|x|^2+ |y|^2 + 4|xy|^2 + |y|^4)$ is replaced by the weight factor $(1+ 4|xy|^2 + |y|^4)$.  The factors become incomparable if $y=0$ and $x$ is very large.  Of course, uniform flatness fails only on a neighborhood of the line 
\[
L_o := \{ (x,y,z)\in S\ ;\ y=0\}
\]
near $\infty$.  More precisely, for any $\delta > 0$ the set 
\[
S_{\delta} := S \cap \Omega _{\delta}
\]
is uniformly flat in $\bbC ^3$, where 
\[
\Omega _{\delta} = \{ (x,y,z) \in \bbC ^3\ ;\  |y| > \delta \}.
\]
Note that $\Omega_{\delta}$, being a product of pseudoconvex domains, is pseudoconvex (and hence the $L^2$-extension theorems apply to it).
\subsection{Reduction}

\begin{prop}\label{extend-from line}
Let $\vp \in \sC^2 (\bbC ^3)$ be a Bargmann-Fock weight, i.e., a weight satisfying \eqref{bf-weight-hyp}.  Then for each $h \in \fB_1 (L_o,\vp)$ there exists $H\in \sB_3(\vp)$ such that $H|_{L_o} = h$.
\end{prop}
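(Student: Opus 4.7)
The plan is to observe that $L_o$ is simply the $x$-axis in $\bbC^3$, since on $S$ the condition $y=0$ forces $z=xy^2=0$. Thus $L_o = \bbC \times \{0\} \times \{0\}$ is a complex line. It sits inside the complex hyperplane $\Pi := \{(x,y,z)\in \bbC^3\ ;\ z=0\} \cong \bbC^2$, which in turn sits inside $\bbC^3$. I would extend $h$ in two stages, each by a single application of Theorem \ref{interp-thm-pv-osv}.

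Both inclusions $L_o \subset \Pi$ (the $x$-axis inside $\bbC^2$) and $\Pi \subset \bbC^3$ are complex linear, hence uniformly flat as hypersurfaces in the respective ambient spaces, and both $L_o$ and $\Pi$ are algebraic, so by Proposition \ref{alg-zero-density} each has vanishing asymptotic upper density with respect to any Bargmann-Fock weight. Moreover, the restriction of a Bargmann-Fock weight to a complex linear subspace remains Bargmann-Fock: if $m\omega_o \le dd^c\vp \le M\omega_o$ on $\bbC^3$, then pulling back along the inclusion $\Pi \hookrightarrow \bbC^3$ yields $m\omega_o^\Pi \le dd^c(\vp|_\Pi) \le M\omega_o^\Pi$, where $\omega_o^\Pi$ denotes the Euclidean K\"ahler form on $\Pi \cong \bbC^2$. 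The same holds for the further restriction to $L_o$.

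First, I would regard $h$ as an element of $\fB_1(L_o, \vp|_\Pi)$ and apply Theorem \ref{interp-thm-pv-osv} to the uniformly flat, zero-density hypersurface $L_o \subset \Pi$ with weight $\vp|_\Pi$, obtaining a function $h_1 \in \fB_2(\Pi, \vp|_\Pi)$ with $h_1|_{L_o} = h$. Next, I would apply Theorem \ref{interp-thm-pv-osv} again, this time to the uniformly flat, zero-density hypersurface $\Pi \subset \bbC^3$ with the weight $\vp$, obtaining $H \in \sB_3(\vp)$ with $H|_\Pi = h_1$. Since $L_o \subset \Pi$, we conclude that $H|_{L_o} = h_1|_{L_o} = h$, as required.

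The argument is essentially routine given the tools already in hand; the only verification is the elementary one that the restriction of a Bargmann-Fock weight to a complex linear subspace is still a Bargmann-Fock weight, which is immediate from the curvature bounds. Alternatively, one could appeal to a codimension-two version of Theorem \ref{ot-tak} (of the type alluded to after that theorem, since $L_o$ is cut out in $\bbC^3$ by the section $(y,z)$ of a trivial rank-two bundle, transverse to the zero section), but the two-step reduction to the existing hypersurface extension theorem is the cleanest route and presents no substantive obstacle.
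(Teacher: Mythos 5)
Your argument is correct and is essentially identical to the paper's proof: the paper also extends $h$ in two steps, first from $L_o$ to the plane $\cP_1=\{z=0\}$ and then from $\cP_1$ to $\bbC^3$, applying Theorem \ref{interp-thm-pv-osv} twice to these uniformly flat, zero-density linear hypersurfaces. Your extra remark that the restriction of a Bargmann-Fock weight to a linear subspace is again Bargmann-Fock is a harmless (and correct) elaboration of a point the paper leaves implicit.
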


\begin{proof}
One simply applies Theorem \ref{interp-thm-pv-osv} twice.  Since $L_o$ is a uniformly flat hypersurface with upper density $0$ in $\bbC ^2  \cong \cP _1 = \{ (x,y,z)\in \bbC ^3\ ;\ z=0\}$, there exists $\tilde h \in \fB _2(\cP_1, \vp)$ such that $\tilde h |_{L_o} = h$. And since $\cP_1$ is a uniformly flat hypersurface with upper density $0$ in $\bbC ^3$, there exists $H \in \sB _3(\vp)$ such that $H|_{\cP_1} = \tilde h$.  Since $L_o \subset \cP_1$, $H|_{L_o} = \tilde h|_{L_o} = h$.  
\end{proof}

\begin{prop}\label{bdd-rest-S}
The restriction map $\sR_{S} : \sB_3 (\vp) \to \fB_2(S,\vp)$ is bounded.
\end{prop}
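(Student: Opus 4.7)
The plan is to prove boundedness by the standard pointwise-to-$L^2$ argument, combining the weighted Bergman inequality with the uniform area bound for an algebraic hypersurface in $\bbC^3$. First, I would invoke Proposition \ref{w-Bergman-ineq} at some fixed radius $r > 0$: for every $F \in \sB_3(\vp)$ and every $p \in \bbC^3$,
\[
|F(p)|^2 e^{-\vp(p)} \le C_r \int_{B_r(p)} |F|^2 e^{-\vp} dV,
\]
where $C_r$ depends only on $r$ and the curvature bounds $m, M$ of $\vp$ (and the Bargmann-Fock hypothesis \eqref{bf-weight-hyp} certainly implies the two-sided bound required by Proposition \ref{w-Bergman-ineq}).

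Next, I would restrict $p$ to $S$, integrate against the induced area form $\omega_o^2/2$, and interchange the order of integration by Fubini:
\[
\int_S |F|^2 e^{-\vp} \frac{\omega_o^2}{2} \le C_r \int_{\bbC^3} |F(q)|^2 e^{-\vp(q)} \, \area\bigl(S \cap B_r(q)\bigr) \, dV(q).
\]
The key remaining observation is that $\area(S \cap B_r(q))$ is bounded uniformly in $q$ by a constant depending only on $r$ and the degree of $S$. This is precisely the algebraic area estimate used in the proof of Proposition \ref{alg-zero-density}: since $S$ is an affine algebraic hypersurface of fixed degree in $\bbC^3$, it is, in any ball, a finite-sheeted cover of a complex hyperplane, so $\area(S \cap B_r(q)) = O(r^4)$ uniformly in $q \in \bbC^3$. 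Plugging this bound in yields
\[
\int_S |F|^2 e^{-\vp} \frac{\omega_o^2}{2} \le C'_r \int_{\bbC^3} |F|^2 e^{-\vp} dV = C'_r \, \|F\|_{\sB_3(\vp)}^2,
\]
which is the desired boundedness of $\sR_S$.

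There is no substantive obstacle in this argument: it is a routine consequence of two ingredients the paper has already established, namely the weighted Bergman inequality and the algebraic area bound recalled in the proof of Proposition \ref{alg-zero-density}. The only item worth verifying carefully is the uniformity in $q$ of both the Bergman constant and the area bound, and both follow from, respectively, the translation-uniform curvature bounds on $\vp$ and the algebraicity together with fixed degree of $S$.
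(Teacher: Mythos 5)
Your proof is correct, and it takes a genuinely different route from the paper's. The paper only sketches an argument of a more geometric flavor: it estimates $|F|^2e^{-\vp}$ at a point $p\in S$ by its average over the disk of radius $\ve$ centered at $p$ and perpendicular to $S$, and then bounds the resulting integral over the union of these normal disks by a covering-multiplicity count --- away from a neighborhood of $L_o=\{y=z=0\}$ the disks are pairwise disjoint (this is the uniform flatness of $S_\delta$), while near $L_o$ each point of $\bbC^3$ lies in at most two of them, giving $\int_S|F|^2e^{-\vp}\omega_o^2 \lesssim 2\,C_\ve\|F\|^2$. You instead average over full balls $B_r(p)\subset\bbC^3$, apply Tonelli, and absorb the resulting factor $\area(S\cap B_r(q))$ using the uniform area bound for an algebraic hypersurface of fixed degree --- exactly the estimate already invoked in Proposition \ref{alg-zero-density}. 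Your version is complete where the paper's is only sketched, and it is strictly more general: it shows that restriction to \emph{any} affine algebraic hypersurface (indeed any hypersurface whose local area $\area(Z\cap B_r(q))$ is bounded uniformly in $q$) is bounded, with no reference to the particular structure of $S$ or to where its uniform flatness fails. What the paper's normal-disk argument buys in exchange is geometric information (bounded overlap of the normal disks) of the kind one needs for sampling-type lower bounds, but for boundedness alone your Fubini argument is the cleaner one.
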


\begin{s-rmk}
Note that if $S$ were uniformly flat then Proposition \ref{bdd-rest-S} would be a consequence of a well-known fact.  But although $S$ is not uniformly flat, it is not far from being so.
\red
\end{s-rmk}

\begin{proof}[Sketch of proof of Proposition \ref{bdd-rest-S}]
Let $F \in \sB_3 (\vp)$.  Using the sub-mean value property \eqref{c0-bi} we can estimate the integral of $|F|^2$ over $S$ with respect to the measure $e^{-\vp} \omega _o ^2$ by the integral over the union of all disks of radius $\ve$ and center on $S$ that are perpendicular to $S$.  Away from some neighborhood of the line $L_o$ these disks will be mutually disjoint.  Near the line $L_o$ each point of $\bbC^3$ is in at most two such disks.  Hence the integral of $|F|^2$ over $S$ is bounded by $2 ||F||^2$ times the constant from the sub-mean value property.
\end{proof}

For any $f \in \fB_2(S,\vp)$ Proposition \ref{extend-from line} yields a function $H \in \sB _3 (\vp)$ such that $H|_{L_o} = f|_{L_o}$.  It follows that the function $g := f- H|_S \in \cO (S)$ vanishes along $L_o$, and by Proposition \ref{bdd-rest-S} $g \in \fB _2 (S,\vp)$.  Thus it suffices to extend functions $g \in \fB _2 (S,\vp)$ that vanish along $L_o$.

\subsection{Extension of functions in $\mathbf{\fB_2(S, \vp)}$ that vanish along $\mathbf{L_o}$}

\subsubsection{\bf Extension away from $\mathbf{L_o}$}

We can apply the $L^2$ Extension Theorem \ref{ot-tak} with $X= \Omega _{\delta}$, $Z= S_{\delta}$, and with $T$, $s=1$, $\vp$ and $\lambda$ as in Subsection \ref{ot-norm-asymp-subsection}.  In $S_{\delta}$ we have 
\[
|dT|^2_{\omega _o} e^{-\lambda} \ge C \frac{1}{1+\frac{|x|^2 + |y|^2}{1 + 4|xy|^2+|y|^4}} \ge C \frac{1}{1+\frac{4|x|^2 + |y|^2}{1 + \delta^2(4|x|^2+|y|^2)}} \ge \frac{C\delta ^2}{1+\delta ^2}
\]
Hence for any $f \in \fB_2 (S, \vp)$ one has 
\[
\int _{S_{\delta}} \frac{|f|^2e^{-\vp}}{|dT|^2_{\omega _o} e^{-\lambda}} \omega _o ^2 \le C_o\delta ^{-2} \int _{S_{\delta}} |f|^2e^{-\vp}\omega _o ^2,
\]
So we find a function $F_{\delta} \in \cO (\Omega _{\delta})$ such that 
\[
F_{\delta} |_{S_{\delta}} = f|_{S_{\delta}} \quad \text{and} \quad \int _{\Omega _{\delta}} |F|^2 e^{-\vp} dV < +\infty.
\]
This argument applies to all $f \in \fB _2 (S,\vp)$, and not just those $f$ that vanish along $L_o$.

\subsubsection{\bf Division by $\mathbf{y}$ in $\mathbf{\fB_2 (S,\vp)}$}  

Consider the domain 
\[
\widetilde \Omega _{\delta} := \{ (x,y,z) \in \bbC ^3\ ;\ |y| < 2\delta\}
\]
and the surface $\widetilde S_{\delta} := S \cap \widetilde \Omega _{\delta}$.

\begin{lem}\label{div-y-lem}
Fix $\delta > 0$.  If $h \in \cO (S)$ and $y h \in \fB_2 (S,\vp)$ then $h \in \fB_2(S,\vp)$.
\end{lem}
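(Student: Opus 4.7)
The plan is to split the integral $\int_S |h|^2 e^{-\vp}\omega_o^2$ according to whether $|y| \geq 2\delta$ or $|y| < 2\delta$, i.e., according to the decomposition $S = (S\setminus \widetilde S_\delta) \cup \widetilde S_\delta$.

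On the outer region $\{|y| \geq 2\delta\}$ the argument is immediate: the pointwise bound $|h|^2 \leq (2\delta)^{-2} |yh|^2$ gives
\[
\int_{S \setminus \widetilde S_\delta} |h|^2 e^{-\vp}\omega_o^2 \le (2\delta)^{-2} \|yh\|^2_{\fB_2(S,\vp)} < +\infty.
\]
The substance of the lemma is therefore the finiteness of the integral over $\widetilde S_\delta$, where the division $h = (yh)/y$ becomes problematic because $y \to 0$.

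To handle $\widetilde S_\delta$, I would use the graph parametrization $\Phi(s,t) = (s,t,st^2)$, under which $y = t$ and $\Phi^{\ast}(\omega_o^2/2) = J(s,t)\, dA(s,t)$ with $J(s,t) = 1 + 4|st|^2 + |t|^4$. Writing $\tilde h := h \circ \Phi \in \cO(\bbC^2)$ and $\tilde\vp := \vp \circ \Phi$, the task reduces to showing
\[
\int_{\{|t| < 2\delta\}} |\tilde h(s,t)|^2 e^{-\tilde\vp(s,t)} J(s,t)\, dA(s,t) < +\infty
\]
given the analogous finiteness of the integrand multiplied by $|t|^2$. The key point is that for each fixed $s$, the function $t \mapsto \tilde h(s,t)$ is entire, so values on the disk $\{|t|<\delta\}$ are controlled by values on the annulus $\{2\delta \leq |t| \leq 3\delta\}$ through Cauchy's integral formula (averaged radially), and on that annulus $|\tilde h| \leq (2\delta)^{-1}|t \tilde h|$. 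This gives a pointwise bound of the form $|\tilde h(s,t_0)|^2 \leq C_\delta \int_{\text{annulus}} |t \tilde h(s,t)|^2\, dA(t)$ for $|t_0| < \delta$, and for $\delta \leq |t_0| < 2\delta$ the direct pointwise estimate $|\tilde h|^2 \leq \delta^{-2}|t\tilde h|^2$ applies.

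The main obstacle is converting these unweighted pointwise estimates into a weighted $L^2$ bound incorporating $e^{-\tilde\vp}J$. The weight $\tilde\vp(s,\cdot)$ has $t$-Hessian of order $M(1 + |s|^2)$ because of the $st^2$ term in the graph equation, so it varies significantly across the annulus for large $|s|$. I would address this with the QuimBo trick (Lemma \ref{weight-normalization}) applied on $t$-disks of radius $r_s = O(1/(1+|s|))$, where $\tilde\vp(s,\cdot) = q_s + 2\operatorname{Re}(g_s) + \psi_s$ with $\|\psi_s\|_{C^1} = O(1)$, so that the weighted Bergman inequality for the holomorphic function $t \mapsto t\tilde h(s,t)$ yields sup estimates on the annulus with constants that are polynomial in $|s|$. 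These polynomial factors are controlled on integration over $s$ by the Gaussian decay of $e^{-\tilde\vp}$ coming from the Bargmann-Fock bound $dd^c\vp \geq m\,dd^c|z|^2$, which on $S$ forces $\tilde\vp(s,t) \geq m|s|^2 + \cdots$ and concentrates the effective $t$-support at scale $O(|s|^{-1/2})$; together with the bound $J(s,t) = O(1 + \delta^2|s|^2)$ for $|t| < 2\delta$, this closes the estimate.
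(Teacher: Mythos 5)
Your overall skeleton is the right one and matches the paper's: split at $|y|=2\delta$, dispose of the outer region by the trivial bound $|h|^2\le(2\delta)^{-2}|yh|^2$, and on the inner region exploit one--variable holomorphy in $t$ to control the disk $\{|t|<2\delta\}$ by an annulus where division by $t$ costs only a factor $\delta^{-2}$, with the QuimBo trick handling the weight. The genuine gap is in the weight normalization. The annulus--to--disk step requires, for each fixed $s$, a \emph{single} decomposition of $t\mapsto\tilde\vp(s,t)$ into (pluriharmonic) $+$ (uniformly bounded) valid on the \emph{whole} $t$-disk of radius $\sim\delta$: only after absorbing the pluriharmonic part into the holomorphic function can one compare the weight at an interior point with the weight on the annulus. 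Applying Lemma \ref{weight-normalization} on $t$-disks of radius $r_s=O(1/(1+|s|))$ does not produce such a decomposition (the harmonic parts $g_s$ on different small disks do not glue into one holomorphic function on the $\delta$-disk), while a direct normalization of $\tilde\vp(s,\cdot)$ on the full $\delta$-disk yields a bounded part of size $O(M\delta^2|s|^2)$, hence multiplicative errors $e^{O(\delta^2|s|^2)}$ --- exponential in $|s|^2$, not polynomial in $|s|$ --- which cannot be absorbed by the Gaussian decay for fixed $\delta$. The assertion that the ``effective $t$-support'' of $h$ concentrates at scale $O(|s|^{-1/2})$ has no justification for a general $h\in\cO(S)$ with $yh\in\fB_2(S,\vp)$, so this step of your argument fails.

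The repair is to normalize the \emph{ambient} weight rather than the pulled-back one: apply Lemma \ref{weight-normalization} to $\vp$ on the polydisk $D_R(0)\times D_{2\delta}(0)\times D_{4\delta^2R}(0)\subset\bbC^3$, with $y$ as the distinguished thin variable. Since $dd^c\vp\le M\omega_o$ in the ambient space, the bounded part $\psi$ satisfies $\sup|\psi|\le C(\delta,M)$ \emph{uniformly in $R$}, and the dangerous growth of the $t$-Hessian of $\Phi^*\vp$ is entirely carried by the pluriharmonic part $2\re g$. One then applies the elementary one-variable estimate
\[
\int_{|y|\le 2\delta}|u|^2\,dA(y)\ \le\ c_o\int_{\delta\le|y|\le 2\delta}|u|^2\,dA(y)
\]
to $u=\Phi^*(he^{-g})\cdot f$ with $f=1,\,2xy,\,y^2$ (to account for the Jacobian $1+4|xy|^2+|y|^4$), inserts $1\le|y|^2/\delta^2$ on the annulus to recover the finite norm of $yh$, and lets $R\to\infty$. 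With this substitution your argument closes; without it, the large-$|s|$ regime is not controlled.
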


\begin{proof}
We work in $\bbC ^2$ after pulling back by the embedding $\Phi$.  Then $f \in \fB_2 (S, \vp)$ means that the $L^2$ norm 
\[
\int _{\bbC^2} |f(x,y)|^2 e^{-\vp (x,y,xy^2)} (1+ 4 |xy|^2 + |y|^4) dV(x,y)
\]
is finite.  Thus the hypothesis $yh \in \fB_2 (S,\vp)$ means that
\[
\int _{\bbC^2} |y\Phi ^* h|^2 e^{-\Phi ^* \vp} (1+ 4 |xy|^2 + |y|^4) dV(x,y) < +\infty
\]
and thus there exists $A>0$ such that for any $R>2\delta$ 
\[
\int _{D_R(0)} \int _{|y| \le 2\delta} |y\Phi ^*h(x,y)|^2 e^{-\Phi ^* \vp(x,y)}(1+4|xy|^2+|y|^4) dA(y) dA(x) \le A.
\]

By Lemma \ref{weight-normalization} (with $m=0$) there exist functions 
\[
g \in \cO (D_R(0) \times D_{2\delta}(0)\times D_{4\delta ^2 R}(0)) \quad \text{and} \quad \psi \in \sC ^2 (D_R(0) \times D_{2\delta}(0)\times D_{4\delta ^2 R}(0))
\]
such that 
\[
\vp = \psi + 2 \re g  \quad \text{and} \quad |\psi | \le C
\]
on $D_R(0) \times D_{2\delta}(0)\times D_{4\delta ^2 R}(0)$.  Note that the constant $C$ depends on $\delta$ but not on $R$.

Now, by elementary complex analysis there exists a universal constant $c_o$ so that for any entire holomorphic function $f \in \cO (\bbC ^2)$
\[
\int _{|y| \le 2\delta} |\Phi^*(he^{-g})(x,y) f(x,y)|^2 dA(y) \le c_o \int _{\delta \le |y| \le 2\delta} |\Phi^*(he^{-g})(x,y)  f(x,y)|^2 dA(y).
\]
Applying this estimate with $f(x,y)$ equal to $1$, $2xy$ and $y^2$ yields 
\begin{eqnarray*}
&& \int _{D_R(0)} \int _{|y| \le 2\delta} |\Phi ^*h(x,y)|^2 e^{-\Phi ^* \vp(x,y)}(1+4|xy|^2+|y|^4) dA(y) dA(x)\\
&\le & e^C \int _{D_R(0)} \int _{|y| \le 2\delta} |\Phi ^*(e^{-g}h)(x,y)|^2 (1+4|xy|^2+|y|^4) dA(y) dA(x)\\
&\le & c_o e^C \int _{D_R(0)} \int _{\delta \le |y| \le 2\delta} |\Phi ^*(e^{-g}h)(x,y)|^2 (1+4|xy|^2+|y|^4) dA(y) dA(x)\\
&\le & \frac{c_o e^C}{\delta^2} \int _{D_R(0)} \int _{\delta \le |y| \le 2\delta} |y \Phi ^*(e^{-g}h)(x,y)|^2 (1+4|xy|^2+|y|^4) dA(y) dA(x)\\
&\le & \frac{c_o e^{2C}}{\delta^2} \int _{D_R(0)} \int _{|y| \le 2\delta} |y \Phi ^*h(x,y)|^2e^{-\Phi ^* \vp(x,y)} (1+4|xy|^2+|y|^4) dA(y) dA(x)\\
&\le & \frac{c_o e^{2C}A}{\delta^2}. 
\end{eqnarray*}
Since $A$, $C$ and $c_o$ are independent of $R$, $yh \in \fB_2 (\widetilde S_{\delta}, \vp)$, as claimed. 
\end{proof}

\subsubsection{\bf Extension near $\mathbf{L_o}$}\label{ext-near-nonflat}

Let $g \in \fB_2 (S, \vp)$ vanish along $L_o$.  Then by the Nullstellensatz and Lemma \ref{div-y-lem} $g = y \tilde g$ for some $\tilde g \in \fB_2 (S_{\delta}, \vp)$.

We can apply the $L^2$ Extension Theorem \ref{ot-tak} in this setting as well, and obtain an extension of $f|_{\widetilde S_{\delta}}$ to $\widetilde \Omega _{\delta}$ we must show that 
\[
\int _{\widetilde S_{\delta}} \frac{|g|^2e^{-\vp}}{|dT|^2_{\omega_o}e^{-\lambda}} \omega _o ^2 < +\infty.
\]
Pulling back to $\bbC ^2$ via the parametrization $\Phi$, we need to show that 
\[
\int _{\bbC} \int _{D_{2\delta}(0)} |g (x,y,xy^2)|^2e^{-\vp(x,y,xy^2)} (1+|x|^2 + |y|^2 + 4|xy|^2 + |y|^4) dA(y) dA(x) < +\infty.
\]
But 
\begin{eqnarray*}
&& \int _{\bbC} \int _{D_{2\delta}(0)} |g (x,y,xy^2)|^2e^{-\vp(x,y,xy^2)} (1+|x|^2 + |y|^2 + 4|xy|^2 + |y|^4) dA(y) dA(x)\\
&=& \int _{\bbC} \int _{D_{2\delta}(0)} |g (x,y,xy^2)|^2e^{-\vp(x,y,xy^2)} (1+ 4|xy|^2 + |y|^4) dA(y) dA(x)\\
&& + \int _{\bbC} \int _{D_{2\delta}(0)} |\tilde g (x,y,xy^2)|^2e^{-\vp(x,y,xy^2)} (|xy|^2 + |y|^4) dA(y) dA(x).
\end{eqnarray*}
Since $g$ and (by Lemma \ref{div-y-lem}) $\tilde g$ are both in $\fB_2 (\widetilde S_{\delta},\vp)$, the last two integrals are finite.  Consequently Theorem \ref{ot-tak} yields $\tilde F_{\delta} \in \cO (\widetilde S_{\delta})$ such that 
\[
\tilde F_{\delta} |_{\widetilde S_{\delta}} = f|_{\widetilde S_{\delta}} \quad \text{and} \quad \int _{\widetilde \Omega_{\delta}} |\tilde F_{\delta}|^2 e^{-\vp} dV < +\infty.
\]

\subsubsection{\bf Patching together the two extensions $\mathbf{F_{\delta}}$ and $\mathbf{\tilde F_{\delta}}$}

Fix a function $\chi \in \sC ^{\infty}_o ([0,\infty))$ such that $\chi (t) = 1$ if $0 \le t \le \delta$, $\chi (t) = 0$ if $t \ge 2\delta$, and $|\chi '| \le 2/\delta$.  Let 
\begin{eqnarray*}
\tilde F(x,y,z) &:=& (1-\chi(|y|)) F_{\delta}(x,y,z) + \chi (|y|)\tilde F_{\delta} (x,y,z)\\
&=& F_{\delta}(x,y,z) +\chi (|y|) (\tilde F_{\delta}(x,y,z) - F_{\delta}(x,y,z)).
\end{eqnarray*}
Then $\tilde F \in \sC ^{\infty} (\bbC ^3) \cap \cO (\{ (x,y,z)\ ;\ \delta \le |y| \le 2\delta\})$, $\tilde F|_S = f$, and 
\[
\int _{\bbC ^3} |\tilde F|^2 e^{-\vp} dV < +\infty.
\]

We wish to correct $\tilde F$ by subtracting from it a function $u \in \sC ^{\infty} (\bbC ^3)$ such that 
\[
\dbar u = \dbar \tilde F, \quad u |_S \equiv 0 \quad \text{and} \quad \int _{\bbC ^3} |u|^2 e^{-\vp} dV < +\infty.
\]
Consider the weight 
\[
\psi = \vp + \log |T|^2 - \lambda,
\]
where $\lambda$ is given by \eqref{lambda-S-defn}.  Then 
\begin{equation}\label{psi-properties}
-\vp \le -\psi \quad \text{and} \quad \di \dbar \psi \geq \di \dbar \vp - \di \dbar \lambda  \ge \frac{m}{2} \omega _o
\end{equation}
for $R > 0$ sufficiently large.  Note also that on $\Omega _{\delta} \cap \widetilde \Omega _{\delta}$ the difference $\tilde F_{\delta} - F_{\delta}$ is holomorphic and vanishes on $S$.  It follows that 
\[
\tilde F_{\delta} - F_{\delta} = T \cdot h
\]
for some $h \in \cO (\Omega _{\delta} \cap \widetilde \Omega _{\delta})$ which evidently satisfies 
\begin{equation}\label{hormander-data-S}
\int _{\Omega _{\delta} \cap \widetilde \Omega _{\delta}} |\tilde F_{\delta}- F_{\delta}|^2e^{-\psi}  dV = \int _{\Omega _{\delta} \cap \widetilde \Omega _{\delta}} |h|^2e^{-\vp + \lambda}  dV.
\end{equation}
We claim that the integral \eqref{hormander-data-S} is finite.  To establish this claim, note that in $\Omega _{\delta}$ the surface $S$ is uniformly flat.  Thus by the proof of Lemma 3.2 in \cite{osv} there exist $\ve _{\delta} > 0$ and $M_{\delta}>0$ such that $U_{\ve_{\delta}}(S_{\delta} \cap \widetilde S_{\delta})$ is a tubular neighborhood of $S_{\delta} \cap \widetilde S_{\delta}$ and 
\begin{equation}\label{asymptotics-of-singularity}
\lambda (x,y,z) \ge \log |T(x,y,z)|^2 - M_{\delta}
\end{equation}
for all $(x,y,z) \in \Omega _{\delta} \cap \widetilde \Omega _{\delta}- U_{\ve _{\delta}/2}$.  Now, 
\[
\int _{\Omega _{\delta} \cap \widetilde \Omega _{\delta}} |h|^2e^{-\vp + \lambda}  dV \le \int _{\Omega _{\delta} \cap \widetilde \Omega _{\delta} - U_{\ve_{\delta}}(S_{\delta} \cap \widetilde S_{\delta})} |h|^2e^{-\vp + \lambda}  dV + \int _{U_{\ve_{\delta}}(S_{\delta} \cap \widetilde S_{\delta})} |h|^2e^{-\vp + \lambda}  dV.
\]
From \eqref{asymptotics-of-singularity} we see that 
\begin{eqnarray*}
\int _{\Omega _{\delta} \cap \widetilde \Omega _{\delta} - U_{\ve_{\delta}}(S_{\delta} \cap \widetilde S_{\delta})} |h|^2e^{-\vp + \lambda}  dV &\le&  e^{M_{\delta}} \int _{\Omega _{\delta} \cap \widetilde \Omega _{\delta}} |Th|^2e^{-\vp}  dV\\
&\le& 2 e^{M_{\delta}} \left ( \int _{\widetilde \Omega _{\delta}} |\tilde F_{\delta}|^2e^{-\vp}  dV + \int _{\Omega _{\delta}} |F_{\delta}|^2e^{-\vp}  dV \right ) < +\infty.
\end{eqnarray*}
On the other hand, once again by the method of proof of \cite[Lemma 5.3]{osv} 
\begin{eqnarray*}
\int _{U_{\ve_{\delta}}(S_{\delta} \cap \widetilde S_{\delta})} |h|^2e^{-\vp + \lambda}  dV & \lesssim & \int _{U_{\ve_{\delta}}(S_{\delta} \cap \widetilde S_{\delta})- U_{\ve_{\delta}/2}(S_{\delta} \cap \widetilde S_{\delta})} |h|^2e^{-\vp + \lambda}  dV\\
&\le & e^{M_{\delta}} \int _{U_{\ve_{\delta}}(S_{\delta} \cap \widetilde S_{\delta})- U_{\ve_{\delta}/2}(S_{\delta} \cap \widetilde S_{\delta})} |Th|^2e^{-\vp}  dV\\
&\le& 2 e^{M_{\delta}} \left ( \int _{\widetilde \Omega _{\delta}} |\tilde F_{\delta}|^2e^{-\vp}  dV + \int _{\Omega _{\delta}} |F_{\delta}|^2e^{-\vp}  dV \right ) < +\infty,
\end{eqnarray*}
which establishes the finiteness of \eqref{hormander-data-S}.

Next, observe that 
\begin{eqnarray*}
\int _{\bbC ^3} |\dbar \tilde F|^2 e^{-\psi} dV &=& \int _{\bbC ^3} |\dbar \chi (|y|)|^2|F_{\delta} - \tilde F_{\delta}|^2 e^{-\psi} dV\\
& \le & \frac{C}{\delta^2} \int _{\Omega _{\delta} \cap \widetilde \Omega_{\delta}} |\tilde F_{\delta}- F_{\delta}|^2e^{-\psi}  dV < +\infty.
\end{eqnarray*}
By H\"ormander's Theorem, \eqref{psi-properties} and the regularity of $\dbar$ there exists $u \in \sC^{\infty}(\bbC ^3)$ such that 
\[
\dbar u = \dbar \tilde F \quad \text{and} \quad \int _{\bbC^3} |u|^2e^{-\vp} dV\le \int _{\bbC^3} |u|^2e^{-\psi} dV < +\infty.
\]
The finiteness of the second integral together with the smoothness of $u$ implies that $u|_S \equiv 0$.  Thus we have a function 
\[
F := \tilde F - u \in \sB_3 (\vp)
\]
such that $F|_S= \tilde F|_S = f$.  The proof of Theorem \ref{main-S} is complete.

\section{Proof of Theorem \ref{main-C2}}

\subsection{Extension from $\mathbf{C_2}$ to $\mathbf{S}$}

Without necessarily explicitly mentioning it, we shall identify $C_2$ with $C_2 \times \{1\} = S \cap \{z=1\}$. In this section we prove the following result.

\begin{d-thm}\label{C2-to-S}
Let $\vp \in \sC ^2 (\bbC^2)$ satisfy the Bargmann-Fock curvature condition \eqref{bf-weight-hyp}, and define 
\[
\tilde \vp (x,y,z) := \vp (x,y) + \ve |z-1|^2.
\]
Then for each $f \in \fB_1 (C_2, \vp)$ there exists $g \in \fB_2 (S,\tilde \vp)$ such that $g|_{C_2} = f$.
\end{d-thm}

The approach is to apply the $L^2$ Extension Theorem \ref{ot-tak}.  However, there is some work to be done before the application of Theorem \ref{ot-tak} is possible.  Of course, we will take $X= S$, $\omega = \omega _o|_S$ and $Z= C_2$, but the rest of the data is perhaps not as obvious.

\subsubsection{\bf Step 1: (Defining section and metric for the defining line bundle)}  Let us start with the function $T \in \cO (S)$ that cuts out $C_2$.  The premise is that $C_2$ is uniformly flat in $S$, and the rationale is that this is so because $C_2 \subset \bbC^2\times \{ 1\}$ is obtained from $S \subset \bbC ^3$ by intersection with the hyperplane $\{ z=1\}$.  If we pursue this clue, we should try 
\[
T(x,y,z) := z-1.
\]
Continuing with our view of $C_2$ are the intersection of $S$ with a plane in $\bbC ^3$, in our search for the weight $\lambda$ we should exploit the idea used in Paragraph \ref{OT-subsection} in the proof of Theorem \ref{interp-thm-pv-osv}.  That is to say, we should set 
\[
\lambda_r (x,y,z) := \frac{1}{\pi r^2} \int _{D_r (z)} \log |\zeta -1|^2 dA(\zeta).
\]
Since this is the same defining data as one uses for a plane in $\bbC ^3$, one sees that there is a constant $L_r > 0$ such that 
\begin{equation}\label{unif-flat-slices}
|dT(x,y,1)|^2_{\omega _o} e^{-\lambda_r(1)} \ge L_r
\end{equation}
for all $(x,y) \in \bbC ^2$.  (This fact can of course easily be verified directly as well.)  Moreover the curvature of $\lambda_r$ is non-negative and bounded above by $\frac{1}{r^2} dd^c |z|^2$, which can be made as small as we like by taking $r$ sufficiently large but fixed.

\subsubsection{\bf Step 2: (Weight modification)} Since $dd^c \lambda_r \ge 0$, Theorem \ref{ot-tak} can be applied with the weight $\tilde \vp$ if there exists $\delta > 0$ so that
\begin{equation}\label{OT-for-S-curv-condn}
dd^c \tilde \vp + {\rm Ricci}(\omega _o|_S) \ge (1+\delta) dd^c \lambda_r
\end{equation}
everywhere on $S$.  

Since $S$ is a complex submanifold of $\bbC ^3$, the curvature of $\omega _o |_S$ can be (and in fact is) smaller that the restriction to $S$ of the curvature of $\omega _o$.  The condition \eqref{OT-for-S-curv-condn} might not be satisfied even for some weights $\vp$ satisfying the Bargmann-Fock curvature bound \eqref{bf-weight-hyp}.  (As it turns out, \eqref{OT-for-S-curv-condn} is satisfied for weights $\vp$ satisfying \eqref{bf-weight-hyp} with $m$ large enough.)  We therefore need to modify the weight $\tilde \vp$ slightly.

To see things more clearly, it is useful to parametrize the surface $S$ by the map $\Phi$ defined by \eqref{S-graph-emb}.  On the other hand, it is also useful not to attach oneself too much to this parametrization.  First, note that 
\[
\omega _o |_S = \Phi ^* \omega _o = dd^c (|s|^2 + |t|^2 + |st^2|^2).
\]
An easy computation shows that 
\[
(\omega _o|_S)^2  = (1+ |2st|^2 + |t^2|^2) dd^c |s|^2 \wedge dd^c|t|^2.
\]
Letting $\Xi (s,t) = (2st, t^2)$, we see that 
\[
{\rm Ricci}(\omega _o|_S) = \Xi ^* ( dd^c (- \log (1+|z^1|^2 + |z^2|^2)))
\]
is the pullback by $\Xi$ of the curvature of the Euclidean metric on $\cO (-1) \to \bbP _2$ in the affine chart $U_o \cong \bbC ^2 \subset \bbP_2$.  Now, 
\[
dd^c (- \log (1+|z|^2)) = \frac{(1+|z|^2)\ii dz \dot \wedge d\bar z - \ii \bar z \cdot dz \wedge z\cdot d\bar z}{(1+|z|^2)^2}\le \frac{2 \ii dz \dot \wedge d\bar z}{1+|z|^2} \le 2\omega _o|_S,
\]
where $ dz \dot \wedge d\bar z = dz^1 \wedge d\bar z ^1 + dz^2 \wedge d\bar z^2$ and $z \dot \wedge d\bar z = z \dot \wedge d\bar z+z \dot \wedge d\bar z$ and $\bar z \cdot dz = z^1 d\bar z^1 + z^2 d\bar z^2$, etc.  Therefore if $\vp$ satisfies the Bargmann-Fock curvature condition with $m > 2$ then one can already apply the $L^2$ Extension Theorem \ref{ot-tak} to the weight $\psi(x,y,z) = \vp(x,y) + m |z|^2$.  

There is, however, another modification that allows us to use the extension theorem.  Namely, we let 
\[
\psi (x,y,z) := \vp (x,y) + \ve |z-1|^2- \mu (x,y) + \log (1 + |2xy|^2 + |y^2|^2),
\]
where 
\[
\mu (x,y) = \frac{1}{\pi r^2} \int _{D_r (x)} \log (1 + |2\xi y|^2 +|y^2|^2) dA(\xi).
\]
By the sub-mean value property for subharmonic functions   
\[
\log (1+ |2xy|^2 + |y^2|^2)  \le \mu (x,y).
\]
And along the curve $C_2$ 
\begin{eqnarray*}
&& \sup _{(x,y) \in C_2} \mu (x,y)  - \log (1+ |2xy|^2 + |y^2|^2) \\
&=& \sup _{t \in \bbC ^*} \frac{1}{\pi r^2} \int _{D_r(0)} \log \frac{1+ |2(\xi +t^2)t^{-1}|^2 + |t|^{-4}}{1+ |2t|^2 + |t|^{-4}} dA(\xi).
\end{eqnarray*}
For $|t|$ large 
\[
\frac{1+ |2(\xi + t^2)t^{-1}|^2 + |t|^{-4}}{1+ |2t|^2 + |t|^{-4}}  \lesssim  r^2,
\]
while for $|t| \sim 0$ 
\[
\frac{1+ |2(\xi + t^2)t^{-1}|^2 + |t|^{-4}}{1+ |2t|^2 + |t|^{-4}}  \lesssim  1,
\]
and hence there exists $A_r$ such that 
\[
\mu (x,y) - \log (1+|2xy|^2+|y|^4) \le  A_r \quad \text{ for all }(x,y) \in C_2.
\]
By taking $r$ sufficiently large we can ensure that $dd^c \mu$ is as small as we like. 

Now, in $\bbC ^3$ we have 
\begin{eqnarray*}
dd^c \psi  &=& dd^c \vp  + \ve dd^c |z-1|^2 - dd^c \mu  + dd^c (\log (1 + |\Xi (x,y)|^2))\\
& \ge & (m-\ve) dd^c (|x|^2 + |y|^2) + \ve dd^c |z-1|^2 + dd^c (\log (1 + |\Xi (x,y)|^2)).
\end{eqnarray*}
Therefore on the surface $S$ we have 
\[
dd^c \psi + {\rm Ricci} (\omega _o|_S) - (1+\delta) dd^c \lambda_r \ge (m-\ve) dd^c (|x|^2 + |y|^2) + \ve dd^c |z-1|^2 - (1+\delta ) dd^c \lambda_r,
\]
and the right hand side is non-negative if one takes $r$ sufficiently large.  

\begin{proof}[Proof of Theorem \ref{C2-to-S}]
Let $f \in \fB_1 (C_2, \vp)$.  Then 
\[
\int _{C_2} \frac{|f|^2 e^{-\psi}}{|dT|^2_{\omega _o} e^{-\lambda_r}} \omega _o \le \frac{e^{A_r}}{L_r} \int _{C_2} |f|^2e^{-\vp} \omega _o < +\infty.
\]
By Theorem \ref{ot-tak} there exists $g \in \fB_2 (S,\psi)$ such that 
\[
g|_{C_2} = f.
\]
Since $\psi \le \tilde \vp$, $g \in \fB_2 (S,\tilde \vp)$, as desired.
\end{proof}

\subsection{Extension from $\mathbf{S}$ to $\mathbf{\bbC ^3}$}

By Theorem \ref{main-S} there exists $\tilde F \in \sB_3 (\tilde \vp)$ such that 
\[
\tilde F|_{S} = g.
\]

\subsection{Restriction from $\bbC ^3$ to $\bbC ^2 \times \{ 1\}$}

Let $F(x,y) := \tilde F (x,y,1)$.  Then 
\begin{eqnarray*}
\int _{\bbC ^2} |F(x,y)|^2 e^{-\vp(x,y)} dV(x,y) &=& \int _{\bbC ^2} |\tilde F(x,y,1)|^2 e^{-\vp(x,y)} dV(x,y)\\
&\le & \frac{\ve}{\pi} \int _{\bbC ^3} |\tilde F(x,y,z)|^2 e^{-\vp(x,y)- \ve |z-1|^2} dV(x,y,z),
\end{eqnarray*}
which shows that $F \in \sB_2 (\vp)$.  Finally, 
\[
F|_{C_2} = \tilde F |_{C_2 \times \{1\}} = g|_{C_2\times \{1\}} = f,
\]
and the proof of Theorem \ref{main-C2} is complete.
\qed

\subsection{Postscript:  the non-flat pairs of $C_2$ are too close together}

Theorem \ref{main-C2} was utterly surprising to us.  We fully expected that the curve $C_2$ would not be interpolating for any Bargmann-Fock weight on $\bbC ^2$.  Indeed, we noted that the points $(\delta ^{-2} , \pm \delta )$ were very close together in $\bbC ^2$ but quite far apart in $C_2$, so we expected to be able to find a function that is large at  $(\delta ^{-2} ,\delta )$ and vanishes at  $(\delta ^{-2} , - \delta )$.  The problem with the latter goal is that it is vague; particularly, the quantitative meaning of the word `large' is here crucial.  To show that $C_2$ is not interpolating, one would need to find, for all $\delta > 0$ sufficiently small, functions $f_{\delta} \in \fB _1(C_2, \vp)$ satisfying 
\begin{equation}\label{non-interp-needs-C2}
|f_{\delta}(\delta ^{-2} ,\delta )|^2e^{-\vp (\delta ^{-2} ,\delta )} = 1, \quad f_{\delta}(\delta ^{-2},-\delta )= 0  \quad \text{and} \quad \delta ||f_{\delta}||^2 \le r  
\end{equation}
for some $r < 1$ independent of $\delta$.  Indeed, such functions would contradict \eqref{c1-bi} of Proposition \ref{w-Bergman-ineq}.

We were able to construct functions $f_{\delta}$ satisfying the first two conditions of \eqref{non-interp-needs-C2}, but the best estimate we could find for such functions is
\[
||f_{\delta}||^2 \sim \delta ^{-2}.
\]
Unable to find functions satisfying \eqref{non-interp-needs-C2}, we reluctantly had to admit to ourselves that perhaps $C_2$ is, after all, interpolating.

\section{Proof of Theorem \ref{main-sigma}}
As mentioned in the introduction, the Proof of Theorem \ref{main-sigma} is the reflection of that of Theorem \ref{main-C2}.  

\subsection{Extension from $\mathbf{C_1}$ to $\mathbf{\Sigma}$}

\begin{d-thm}\label{C1-to-Sigma}
Let $\vp \in \sC ^2 (\bbC^3)$ satisfy the Bargmann-Fock curvature condition \eqref{bf-weight-hyp}
Then for each $f \in \fB_1 (C_2,\vp)$ there exists $g \in \fB_2 (\Sigma,\vp)$ such that $g|_{C_2} = f$.
\end{d-thm}

\begin{proof}
We use the same function $T(x,y,z)= z-1$ and $\lambda _r$ as in the proof of Theorem \ref{C2-to-S}.

For the surface $\Sigma$ we use the parametrization $\Psi (s,t) = (s,t,s^2t^2)$.  The metric induced on $\Sigma$ by the Euclidean metric is then 
\[
\omega _o |_{\Sigma} = \Psi ^* \omega _o = dd^c (|s|^2 + |t|^2 + |s^2t^2|),
\]
and 
\[
(\omega _o |_{\Sigma})^2 = (1+ |2st^2|^2 + |2ts^2|^2) dd^c |s|^2 \wedge dd^c|t|^2.
\]
Thus 
\[
{\rm Ricci}(\omega _o |_{\Sigma}) = - \Pi^* dd^c \log (1+|z^1|^2 + |z^2|^2),
\]
where $\Pi (s,t) = (2st^2, 2ts^2) = 2st (s,t)$.

To apply the $L^2$ extension theorem, we again need to rid ourselves of the negative curvature contributed by ${\rm Ricci}(\omega _o |_{\Sigma})$.  As with the surface $S$, we use a modified weight 
\[
\psi (x,y,z) := \vp (x,y,z) - \nu (x,y,z) + \log (1+4|zx^2| + 4|zy^2|),
\]
where 
\[
\nu (x,y,z) =   \frac{4!}{\pi ^2r^4}\int_{B_r(x,y)} \log (1+4|z\xi^2| +4|z\eta^2|) dV(\xi,\eta).
\]
By the sub-mean value property 
\[
\log (1+4|zx^2| + 4|zy^2|) \le \nu (x,y,z),
\]
Along the curve $C_1$
\begin{eqnarray}\label{nu-bound}
\nonumber && \sup _{(x,y,z) \in C_2 \times \{1\}} \nu (x,y,z) - \log (1+4|zx^2| + 4|zy^2|) \\
&=&  \sup _{(x,y) \in C_2} \frac{4!}{\pi ^2 r^4} \int _{B_r (0)} \log \frac{1+4 |\xi +x|^2 + 4 |\eta +y|^2}{1+4|x^2| + 4|y^2|}dV(\xi,\eta)\\
\nonumber &=&  \sup _{t \in \bbC^*} \frac{4!}{\pi ^2 r^4} \int _{B_r (0)} \log \frac{1+4 |\xi +t|^2 + 4 |\eta +t^{-1}|^2}{1+4|t|^2 + 4|t|^{-2}}dV(\xi,\eta),
\end{eqnarray}
and again the last expression is bounded, as one can check by looking near $t=0$ and $t = \infty$.  Lastly, by taking $r >> 1$ we can guarantee that $dd^c \nu$ is as small as we like.

Now, $\log (1+4|zx^2| + 4|zy^2|) = \log (1+|2yx^2|^2 + |2xy^2|^2)$ on the surface $\Sigma$, and hence 
\[
dd^c \psi + {\rm Ricci}(\omega _o|_S) = dd^c \vp  - dd^c \nu .
\]
It follows that 
\[
dd^c \psi + {\rm Ricci} (\omega _o|_S) - (1+\delta) dd^c \lambda_r \ge (m-\ve) dd^c (|x|^2 + |y|^2 + |z|^2)  - (1+\delta ) dd^c \lambda_r,
\]
for some small $\epsilon$, and the latter is positive as long as $r$ is sufficiently large, which we have assumed is the case.

Now let $f \in \fB_1 (C_1, \vp)$.  Then by \eqref{unif-flat-slices} and \eqref{nu-bound}
\[
\int _{C_2} \frac{|f|^2e^{-\psi}}{|dT|^2_{\omega _o} e^{-\lambda _r}} \omega _o \lesssim \int _{C_2} |f|^2 e^{-\vp} \omega _o.
\]
By Theorem \ref{ot-tak} there exists $g \in \fB_2 (\Sigma,\psi)$ such that 
\[
g|_{C_1} = f.
\]
Therefore, $g \in \fB_2 (\Sigma,\vp)$, as desired.
\end{proof}

\subsection{End of the proof of Theorem \ref{main-sigma}}

To achieve our contradiction, suppose $\Sigma$ is interpolating with respect to some Bargmann-Fock weight function $\vp \in \sC^2 (\bbC ^3)$. 

Let $f \in \fB_1 (C_1, \vp)$.  By Theorem \ref{C1-to-Sigma} there exists $g \in \fB_2 (\Sigma, \tilde \vp)$ such that $g|_{C_1}= f$.  Since $\tilde \vp \ge \vp$, $g \in \fB_2 (\Sigma, \vp)$. By our hypothesis there exists $\tilde F \in \sB_3(\vp)$ such that $F|_{\Sigma}=g$, and hence $\tilde F|_{C_1} = f$.  Since $\vp$ is a Bargmann-Fock weight, for each $(x,y) \in \bbC ^2$ $\vp (x,y, \cdot )$ is a Bargmann-Fock weight in $\bbC$.  Hence by \eqref{c0-bi} of Proposition \ref{w-Bergman-ineq}
\[
|\tilde F(x,y,1)|^2e^{-\vp(x,y,1)} \lesssim \int _{\bbC } |\tilde F(x,y,z)|^2e^{-\vp(x,y,z)} dA (z).
\]
Let Integration over $(x,y) \in \bbC ^2$ with respect to Lebesgue measure yields 
\[
\int _{\bbC ^2} |\tilde F(x,y,1)|^2 e^{-\vp(x,y,1)} dV(x,y) \lesssim \int _{\bbC^3 } |\tilde F(x,y,z)|^2e^{-\vp(x,y,z)} dV (x,y,z) <+\infty.
\]
Letting $F(x,y) := \tilde F(x,y,1)$, we find that $F|_{C_1} = f$ and $F \in \sB_2 (\vp(\cdot, \cdot ,1))$.  In other words, 
\[
\sR _{C_1} : \sB_2 (\vp(\cdot, \cdot ,1)) \to \fB _1(C_1, \vp(\cdot, \cdot ,1))
\]
is surjective.  Since $\vp(\cdot, \cdot ,1)$ is also a Bargmann-Fock weight, Theorem \ref{main-C1} is contradicted. 
\qed 

\subsection{Postscript: the crucial difference between $S$ and $\Sigma$}

As mentioned in the second-to-last paragraph of Subsection \ref{light-ss} of the introduction, the proof of Theorem \ref{main-sigma} by contradiction to Theorem \ref{main-C1} came to us relatively quickly by that point in our research.  Nevertheless, we wondered why we couldn't just repeat the proof of Theorem \ref{main-S} for the surface $\Sigma$ in place of $S$.  Since our proof of Theorem \ref{main-sigma} is indirect, it does not help one understand what precisely goes wrong in the proof of Theorem \ref{main-S} when $S$ is replaced by $\Sigma$.

As we see it, the difficulty is by trying to imitate the step appearing in Paragraph \ref{ext-near-nonflat}.  In the case of $S$, the uniform non-flatness is concentrated along the line $L_o$, and for this line we have a nice $L^2$ extension theorem.  But in the case of $\Sigma$ the non-flat points concentrate along the variety $\Lambda _o = \{ xy=0, z=0\}$.  The union $\{ |x| < \delta\} \cup \{ |y| < \delta\} \subset \bbC ^3$ is not pseudoconvex, and it is one of the standard exercises relating to Hartogs' Phenomenon that taking the pseudoconvex hull of this set increases its size significantly.  It is here that our method breaks down.

Of course, at the end of the day there is no way to remedy this problem.  Indeed, Theorem \ref{main-sigma}, and not its opposite, is true.

\end{document}